\newlength{\defbaselineskip}
\newcommand{\setlinespacing}[1]%
{\setlength{\baselineskip}{#1 \defbaselineskip}}
\numberwithin{equation}{section}
\newtheorem{thm}{Theorem}[section]
\newtheorem{lem}[thm]{Lemma}
\newtheorem{prop}[thm]{Proposition}
\theoremstyle{definition}
\theoremstyle{remark}
\newtheorem{rem}[thm]{Remark}
\numberwithin{equation}{section}
\begin{document}

\title[Strichartz estimates and local regularity]
{Strichartz estimates and local regularity for the elastic wave equation with singular potentials}

\author{Seongyeon Kim, Yehyun Kwon and Ihyeok Seo}

\thanks{This work was supported by a KIAS Individual Grant (MG073701) at Korea Institute for Advanced Study and NRF-2020R1F1A1A01073520 (Kwon) and NRF-2019R1F1A1061316 (Seo).}

\subjclass[2010]{Primary:35B45,35B65; Secondary:35L05 }
\keywords{Strichartz estimates, regularity, elastic wave equation}

\address{Department of Mathematics, Sungkyunkwan University, Suwon 16419, Republic of Korea}
\email{synkim@skku.edu}

\address{School of Mathematics, Korea Institute for Advanced Study, Seoul 02455, Republic of Korea}
\email{yhkwon@kias.re.kr}

\address{Department of Mathematics, Sungkyunkwan University, Suwon 16419, Republic of Korea}
\email{ihseo@skku.edu}

\begin{abstract}
We obtain weighted $L^2$ estimates for the elastic wave equation perturbed by singular potentials including the inverse-square potential.
We then deduce the Strichartz estimates under the sole ellipticity condition for the Lam\'e operator $-\Delta^\ast$. This improves upon the previous result in \cite{BFRVV} which relies on a stronger condition to guarantee the self-adjointness of $-\Delta^\ast$.
Furthermore, by establishing local energy estimates for the elastic wave equation we also prove that the solution has local regularity.
\end{abstract}

\maketitle

\section{Introduction}
We consider the Cauchy problem for the elastic wave equation
\begin{equation} \label{eq}
\begin{cases}
\partial_t^2 u -\Delta^* u + \delta V(x) u = 0, \\
u(0,x)=f(x), \quad \partial_t u(0,x)=g(x),
\end{cases}
\end{equation}
where the fields $f, g\colon \mathbb{R}^n \rightarrow \mathbb{R}^n$ and $u\colon \mathbb{R}\times \mathbb{R}^n \rightarrow \mathbb{R}^n$ take values in $\mathbb R^n$, $\delta\in\mathbb R$, $V\colon\mathbb{R}^n \rightarrow \mathcal{M}_{n\times n}(\mathbb{R})$ is a matrix valued potential, and $\Delta^*$ denotes the Lam\'e operator defined by
\begin{equation} \label{lameform}
	\Delta^*u =\mu\Delta u +(\lambda + \mu)\nabla \mathrm{div}u.
\end{equation}
Here the Laplacian $\Delta$ acts on each component of $u$ and
the Lam\'e coefficients $\lambda,\mu\in\mathbb{R}$ are assumed to obey the standard conditions
\begin{equation} \label{ellipticity}
\mu >0, \quad \lambda + 2\mu >0
\end{equation}
which guarantee the ellipticity of $\Delta^*$ (see \eqref{decLa} below).
The equation has been widely used to describe wave propagation in an elastic medium,
and in such case $u$ denotes the displacement field of the medium (e.g., \cite{LL,MH}).

When $\lambda+\mu=0$, the equation \eqref{eq} is reduced to the classical wave equation
\begin{equation} \label{wave_eqn}
\begin{cases}
\partial_t ^2u  - \Delta u + \delta V(x)u= 0, \\
u(0,x)=f(x),\quad \partial_t u(0,x) = g(x),
\end{cases}
\end{equation}
where $f$, $g$, $u$ and $V$ are now assumed scalar valued. The following space-time integrability of the solution to \eqref{wave_eqn}, known as \textit{Strichartz estimates}, has been extensively studied over the past several decades:
\begin{equation} \label{Strr}
\|u\|_{L^q_t \dot H^{\sigma}_r} \lesssim \|f\|_{\dot H^{1/2}} + \|g\|_{\dot H^{-1/2}},
\end{equation}
where $(q,r)$ is wave-admissible, i.e., $2\le q\le \infty$, $2\leq r < \infty$,
\begin{equation} \label{admi}
\frac{2}{q} + \frac{n-1}{r} \leq \frac{n-1}{2} \quad \text{and} \quad \sigma = \frac{1}{q} + \frac{n}{r} - \frac{n-1}{2}.
\end{equation}
When $n=3$  \eqref{Strr} fails at the endpoint $(q,r)=(2,\infty)$ (\cite{KM}). For the free wave equation ($\delta=0$), the Strichartz estimate \eqref{Strr} with $q=r$ was obtained in \cite{Str} in connection with the restriction theorems for the cone. See \cite{LS,KT} for the general case $q\neq r$.
For the perturbed case ($\delta\neq0$), \eqref{Strr} has been intensively studied to get as close as possible to potentials that decay like $|x|^{-2}$ for large $x$
(\cite{BS,C,GV,PST-Z,BPST-Z,BPST-Z2,KSS}).
This is because this decay is known to be critical for the validity of the Strichartz estimates \eqref{Strr}.
See \cite{GVV} which concerns explicitly the Schr\"odinger case but can be adapted to the wave equation as well.

While \eqref{Strr} is well understood for the wave equation, much less is known for the elastic case.
 Concerning \eqref{eq}, Barcel\'o et al. \cite{BFRVV} recently obtained \eqref{Strr} for small $|\delta|$
 and the wave-admissible $(q,r)$, $q>2$, with a symmetric $V$ having the critical\footnote{As remarked in \cite{BFRVV}, by the same kind of arguments as in \cite{GVV}, it may be shown that the inverse square decay is critical also for the elastic wave equation.} decay $|V(x)|\lesssim |x|^{-2}$,
but under the stronger condition
$$
	\mu >0, \quad \lambda + \frac{2\mu}{n} >0
$$
than the ellipticity condition \eqref{ellipticity}.
This strong assumption admits the self-adjoint (Friedrichs) extension of the positive symmetric operators $-\Delta^\ast$ and $-\Delta^* +\delta V$
defined initially on $H^1$.
By the spectral theorem, the solution to \eqref{eq} is now written as
\begin{equation*}
u= \cos (t\sqrt{-\Delta^*+\delta V})f + \frac{\sin(t\sqrt{-\Delta^*+\delta V})}{\sqrt{-\Delta^* +\delta V}} g
\end{equation*}
on which their work is based.

In this abstract approach, it seems not possible to get \eqref{Strr} with the ellipticity condition \eqref{ellipticity} only.
To overcome this lack, we instead focus on the Fourier multiplier of $\sqrt{-\Delta^*}$
and then represent the solution to \eqref{eq} as the sum of the solution to the free case plus a Duhamel term
by regarding the potential term as a source term.
Advantages of our approach are twofold: We do not need any assumption on $\lambda$ and $\mu$ other than \eqref{ellipticity},
and the potential $V$ does not need to be symmetric any more.

We also further relax the pointwise condition $|V(x)|\lesssim |x|^{-2}$ into some integrability condition at the same scale, namely, $V\in\mathcal{F}^p$
where $\mathcal F^p$ denotes the Fefferman-Phong class defined, for $1 \leq p \leq n/2$, by
\begin{equation*}
	\mathcal F^p:=\bigg\{ V\colon\mathbb R^n\to \mathcal M_{n\times n}(\mathbb R)\colon \|V\|_{\mathcal{F}^p} =\sup_{x\in \mathbb{R}^n,r>0} r^{2-\frac np} \bigg( \int_{B(x,r)}|V(y)|^p dy\bigg)^{\frac1p} < \infty\bigg\}.
\end{equation*}
Here, $B(x,r)$ denotes the open ball in $\mathbb R^n$ centered at $x$ with radius $r$ and $|V|=\big(\sum_{i,j=1}^n |V_{ij}|^2\big)^{1/2}$.
Note that if $1\leq p<n/2$ the class $\mathcal{F}^p$ contains the weak space $L^{n/2,\infty}$, and in particular, the inverse square potential $|x|^{-2}$.

To achieve the purpose we combine two kind of arguments; one from the theory of maximal functions involving the $A_p$ class and the other from elliptic regularity estimates (see Section \ref{sec4}).
As a result, we obtain the following theorem which improves upon \cite[Theorem 1.2]{BFRVV}.

\begin{thm} \label{thm1}
	Let $n\ge3$ and $V \in \mathcal{F}^p$ for $p>(n-1)/2$.
	 Let $u$ be a solution to \eqref{eq} with $(f,g)\in \dot H^{1/2} (\mathbb{R}^n) \times \dot H^{-1/2}(\mathbb{R}^n)$.
	There is an $\varepsilon >0$ such that if $|\delta|<\varepsilon$ then
	\begin{equation} \label{Str}
	\|u\|_{L_t^q \dot H^{\sigma}_{r}} \lesssim \|f\|_{\dot H^{1/2}} + \|g\|_{\dot H^{-1/2}}
	\end{equation}
whenever $(q,r)$ is wave-admissible with $q>2$.
Here, $\|u\|_{L^r} = \|\{u_j\}\|_{\ell_j^rL^r}$.
\end{thm}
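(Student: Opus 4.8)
The plan is to reduce the free elastic propagators to scalar wave propagators, deduce \eqref{Str} in the unperturbed case $\delta=0$ from the classical estimate \eqref{Strr}, and then absorb the potential term perturbatively, using the weighted $L^2$ estimates of Section~\ref{sec4} as the engine and the smallness of $|\delta|$ to close a contraction. \emph{Step 1 (free flow).} Write $\Pi=\mathcal F^{-1}\big(\tfrac{\xi\otimes\xi}{|\xi|^2}\big)\mathcal F$ for the projection onto curl-free fields and $\Pi^\perp=I-\Pi$. From \eqref{lameform} the symbol of $-\Delta^\ast$ is $|\xi|^2\big((\lambda+2\mu)\tfrac{\xi\otimes\xi}{|\xi|^2}+\mu(I-\tfrac{\xi\otimes\xi}{|\xi|^2})\big)$, so by \eqref{ellipticity} the speeds $c_p=\sqrt{\lambda+2\mu}$ and $c_s=\sqrt\mu$ are positive and
\begin{equation*}
-\Delta^\ast=(\lambda+2\mu)|D|^2\,\Pi+\mu|D|^2\,\Pi^\perp,\qquad \cos\!\big(t\sqrt{-\Delta^\ast}\big)=\cos(tc_p|D|)\Pi+\cos(tc_s|D|)\Pi^\perp,
\end{equation*}
and likewise $\frac{\sin(t\sqrt{-\Delta^\ast})}{\sqrt{-\Delta^\ast}}=\frac{\sin(tc_p|D|)}{c_p|D|}\Pi+\frac{\sin(tc_s|D|)}{c_s|D|}\Pi^\perp$. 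Since the entries $\xi_i\xi_j/|\xi|^2$ of $\Pi$ are Mikhlin multipliers, bounded on $\dot H^s_r$ for $1<r<\infty$, the free solution $u_0$ of \eqref{eq} with $\delta=0$ inherits \eqref{Strr} componentwise; with the $\ell^r_j$ normalization of $\|\cdot\|_{L^r}$ this is exactly \eqref{Str} for $\delta=0$.

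\emph{Step 2 (Duhamel set-up and the weighted ingredients).} By Duhamel's formula any solution of \eqref{eq} satisfies
\begin{equation*}
u(t)=u_0(t)+\delta\,\mathcal D[Vu](t),\qquad \mathcal D[F](t):=-\int_0^t\frac{\sin\big((t-s)\sqrt{-\Delta^\ast}\big)}{\sqrt{-\Delta^\ast}}F(s)\,ds.
\end{equation*}
For $V\in\mathcal F^p$ with $p>(n-1)/2$, Section~\ref{sec4} provides the forward weighted estimate $\||V|^{1/2}u_0\|_{L^2_{t,x}}\lesssim\|V\|_{\mathcal F^p}^{1/2}\big(\|f\|_{\dot H^{1/2}}+\|g\|_{\dot H^{-1/2}}\big)$. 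Combining this with the free Strichartz estimate of Step~1 by the standard $TT^\ast$ argument and the Christ--Kiselev lemma (this is where $q>2$ is needed) yields the two inhomogeneous bounds $\|\mathcal D[|V|^{1/2}G]\|_{L^q_t\dot H^\sigma_r}\lesssim\|V\|_{\mathcal F^p}^{1/2}\|G\|_{L^2_{t,x}}$ and $\||V|^{1/2}\mathcal D[|V|^{1/2}G]\|_{L^2_{t,x}}\lesssim\|V\|_{\mathcal F^p}\|G\|_{L^2_{t,x}}$.

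\emph{Step 3 (contraction).} Since $|Vu|\le|V|\,|u|$ pointwise (Cauchy--Schwarz on the Frobenius norm), the function $G:=|V|^{-1/2}Vu$, set to $0$ where $|V|=0$, satisfies $\|G\|_{L^2_{t,x}}\le\||V|^{1/2}u\|_{L^2_{t,x}}$. Hence on $Y=\{u:\|u\|_Y:=\|u\|_{L^q_t\dot H^\sigma_r}+\||V|^{1/2}u\|_{L^2_{t,x}}<\infty\}^n$ the map $\Phi(u):=u_0+\delta\,\mathcal D[Vu]$ obeys, by Steps~1--2, $\|\Phi(u)\|_Y\le C(\|f\|_{\dot H^{1/2}}+\|g\|_{\dot H^{-1/2}})+C|\delta|\,\|u\|_Y$ and $\|\Phi(u)-\Phi(v)\|_Y\le C|\delta|\,\|u-v\|_Y$ with $C=C(n,\lambda,\mu,\|V\|_{\mathcal F^p},q,r)$. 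Taking $\varepsilon=1/(2C)$, for $|\delta|<\varepsilon$ the map $\Phi$ is a contraction on a ball of $Y$; its unique fixed point solves \eqref{eq}, lies in $Y$, and satisfies \eqref{Str}, and by uniqueness in $Y$ together with the Duhamel representation it coincides with the given $u$.

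\emph{Main obstacle.} The substance is the weighted $L^2$ estimate invoked in Step~2, proved under the bare ellipticity condition \eqref{ellipticity} and over the whole Fefferman--Phong range $p>(n-1)/2$. Via the decomposition of Step~1 this reduces to weighted $L^2$ estimates for the scalar wave flow composed with the rough multipliers $\Pi,\Pi^\perp$; the scalar bounds rest on restriction/trace estimates for the light cone of Stein--Tomas type, which is precisely what forces the threshold $p>(n-1)/2$, while the passage from the pointwise model $|x|^{-2}$ to the integral class $\mathcal F^p$ goes through the Hardy--Littlewood maximal function and $A_p$-weight theory. What lets us drop the hypothesis $\lambda+2\mu/n>0$ of \cite{BFRVV} (and the symmetry of $V$) is that, working with the Fourier multiplier of $\sqrt{-\Delta^\ast}$ rather than through the spectral theorem, one only needs the $L^r$ elliptic estimate $\|D^2u\|_{L^r}\lesssim\|\Delta^\ast u\|_{L^r}$ for the Lam\'e system (valid under \eqref{ellipticity} alone) together with the $A_p$-weighted boundedness of $\Pi$; interlacing these two is what occupies Section~\ref{sec4}.
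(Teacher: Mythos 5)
Your overall architecture matches the paper's: decompose the free propagators through the Leray/Helmholtz projections into two scalar wave flows with speeds $\sqrt{\mu}$ and $\sqrt{\lambda+2\mu}$, get the free Strichartz bound componentwise, treat $\delta Vu$ as a source, bound the Duhamel term by the dual of the homogeneous weighted estimate together with the Christ--Kiselev lemma (this is where $q>2$ enters, exactly as in the paper), and close using a weighted $L^2$ bound on $u$ itself obtained by a contraction for small $|\delta|$. The problem is in Step~2, at the single point where the real work of the paper lies. You claim that the weighted-to-weighted retarded bound
\begin{equation*}
\Big\| |V|^{1/2}\int_0^t \frac{\sin\big((t-s)\sqrt{-\Delta^\ast}\big)}{\sqrt{-\Delta^\ast}}\,|V|^{1/2}G(s)\,ds\Big\|_{L^2_{t,x}} \lesssim \|V\|_{\mathcal F^p}\,\|G\|_{L^2_{t,x}}
\end{equation*}
follows from the homogeneous weighted estimate ``by the standard $TT^\ast$ argument and the Christ--Kiselev lemma.'' It does not: $TT^\ast$ gives only the untruncated operator $\int_{-\infty}^{\infty}$, and the Christ--Kiselev lemma requires the time exponent of the output to be strictly larger than that of the input, whereas here both are $L^2_t$ --- precisely the excluded endpoint. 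Since your contraction space $Y$ contains the component $\||V|^{1/2}u\|_{L^2_{t,x}}$, this estimate is indispensable to close Step~3, so the argument as written has a genuine gap.

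The paper fills this gap differently (Proposition \ref{prop3}): it quotes the retarded weighted estimate \eqref{kkk} for the scalar wave equation from Ruiz--Vega, applies it to $u_S$ and $u_P$ separately, and then --- this is the nontrivial new ingredient --- recombines the Helmholtz components of the source. Note that $(Vu)_S$ and $(Vu)_P$ are not $Vu_S$ and $Vu_P$, and the Leray projections are not bounded on $L^2(|V|^{-1})$ for a general $V\in\mathcal F^p$; the paper circumvents this by replacing $|V|$ with the $A_1$ weight $W=M(|V|^\delta)^{1/\delta}$ (using $|V|\le W$ a.e.\ and $\|W\|_{\mathcal F^p}\lesssim\|V\|_{\mathcal F^p}$) and proving the elliptic regularity Lemma \ref{lemma}, i.e.\ weighted $L^2(W^{\pm1})$ bounds for double Riesz transforms with constants uniform in $V$, so that $\|F_S\|_{L^2(W^{-1})}+\|F_P\|_{L^2(W^{-1})}\lesssim\|F\|_{L^2(W^{-1})}$. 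Your closing paragraph gestures at maximal functions and $A_p$ weights, but the proof itself never carries this out, and the shortcut you substitute for it (Christ--Kiselev at $L^2_t\to L^2_t$) fails. Everything else --- the free-flow reduction, the dual estimate plus Christ--Kiselev for the $L^q_t\dot H^\sigma_r$ norm of the Duhamel term, and the smallness/uniqueness bookkeeping --- is essentially the paper's argument.
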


In addition to the Strichartz estimates, we also obtain the following theorem which particularly shows a local regularity of solutions to the elastic wave equation. Let us denote by $L^2_{x,t}(w(x))$ the weighted space $L^2(w(x)dxdt)$.

\begin{thm} \label{thm2}
Let $n\ge3$ and $V \in \mathcal{F}^p$ for $p>(n-1)/2$.
There is an $\varepsilon>0$ such that if $|\delta|\le \varepsilon$ then there exists a unique solution $u \in L^{2}_{x,t}(|V|)$ to \eqref{eq}
with $(f,g)\in \dot H^{1/2} (\mathbb{R}^n) \times \dot H^{-1/2}(\mathbb{R}^n)$
	satisfying
	\begin{equation} \label{exi}
	u \in C([0,\infty);\dot H^{1/2}(\mathbb{R}^n)) \quad and \quad \partial_t u \in C([0,\infty);\dot H^{-1/2}(\mathbb{R}^n)).
	\end{equation}
	Furthermore,
	\begin{equation} \label{wei}
	\|u\|_{L^2_{x,t}(|V|)} \lesssim \|V\|^{1/2}_{\mathcal{F}^p} \big(\|f\|_{\dot H^{1/2}} + \|g\|_{\dot H^{-1/2}} \big)
	\end{equation}
	and for any $T>0$
\begin{align}\label{smoo}
	\nonumber\sup_{x_0\in \mathbb{R}^n ,R>0} \frac{1}{R} \int_{B(x_0,R)}\int_{-T}^{T} & |(1-\Delta)^{1/4}u|^2+|(1-\Delta)^{-1/4}\partial_tu|^2 dtdx \\
&\lesssim C(T)(\|f\|^2_{\dot H^{1/2}} + \|g\|^2_{\dot H^{-1/2}}).
\end{align}
\end{thm}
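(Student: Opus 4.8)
The plan is to run a fixed-point (Duhamel/perturbation) argument in the space $X:=L^2_{x,t}(|V|)$, treating $\delta V u$ as a source term for the free elastic wave equation. First I would split the Lam\'e operator via its Helmholtz-type decomposition $-\Delta^\ast = -(\lambda+2\mu)\nabla\mathrm{div} - \mu\,\mathrm{curl}\,\mathrm{curl}$ (the identity \eqref{decLa} alluded to in the text), so that on the divergence-free and curl-free parts $\sqrt{-\Delta^\ast}$ acts as $\sqrt{\mu}\,\sqrt{-\Delta}$ and $\sqrt{\lambda+2\mu}\,\sqrt{-\Delta}$ respectively; both are just rescaled half-wave propagators, and the ellipticity condition \eqref{ellipticity} is exactly what makes both speeds positive and real. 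Writing the solution to \eqref{eq} as
\begin{equation*}
u = u_{\mathrm{free}} - \delta\int_0^t \frac{\sin((t-s)\sqrt{-\Delta^\ast})}{\sqrt{-\Delta^\ast}}\,V(\cdot)u(s)\,ds,
\end{equation*}
where $u_{\mathrm{free}}$ solves the free case with the same data, I would define $\Phi(u)$ by the right-hand side and seek a fixed point.

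The two estimates that drive the argument are (i) a homogeneous weighted bound $\|u_{\mathrm{free}}\|_{L^2_{x,t}(|V|)}\lesssim \|V\|_{\mathcal F^p}^{1/2}(\|f\|_{\dot H^{1/2}}+\|g\|_{\dot H^{-1/2}})$ and (ii) the corresponding inhomogeneous (Duhamel) bound $\big\|\int_0^t \sin((t-s)\sqrt{-\Delta^\ast})(-\Delta^\ast)^{-1/2}F(s)\,ds\big\|_{L^2_{x,t}(|V|)}\lesssim \|V\|_{\mathcal F^p}\,\|F\|_{L^2_{x,t}(|V|^{-1})}$. Both are the weighted $L^2$ (Kato-smoothing / Morrey--Campanato) estimates for the elastic wave equation that — per the abstract — are established earlier in the paper; after the Helmholtz reduction they follow from the corresponding scalar-wave weighted resolvent/smoothing estimates with weights in the Fefferman--Phong class $\mathcal F^p$, $p>(n-1)/2$, combined with the $A_p$/maximal-function machinery mentioned in the text. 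Granting (i)–(ii), $\Phi$ maps $X$ to $X$ with $\|\Phi(u)-\Phi(v)\|_X \le C|\delta|\,\|V\|_{\mathcal F^p}\,\|u-v\|_X$, so for $|\delta|\le\varepsilon$ small (depending on $\|V\|_{\mathcal F^p}$) $\Phi$ is a contraction, giving the unique solution $u\in L^2_{x,t}(|V|)$ and, from $\|u\|_X\le \|u_{\mathrm{free}}\|_X + C|\delta|\|V\|_{\mathcal F^p}\|u\|_X$, the quantitative bound \eqref{wei}.

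For the regularity statement \eqref{exi}, I would feed the fixed point $u\in L^2_{x,t}(|V|)$ back into the Duhamel formula: the source term $\delta V u$ lies in $L^1_{\mathrm{loc},t}\dot H^{-1/2}$ because $\|Vu(s)\|_{\dot H^{-1/2}}\lesssim \|V\|_{\mathcal F^p}\,\||V|^{1/2}u(s)\|_{L^2}$ by the dual weighted Sobolev embedding (again $\mathcal F^p$, $p>(n-1)/2$), so the standard energy theory for the free elastic wave equation gives $u\in C([0,\infty);\dot H^{1/2})$ and $\partial_t u\in C([0,\infty);\dot H^{-1/2})$, and uniqueness in this class follows from uniqueness in $X$. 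Finally, for the local smoothing bound \eqref{smoo} I would again use Duhamel: the free part contributes the local energy (Morrey--Campanato) estimate
$$\sup_{x_0,R}\frac1R\int_{B(x_0,R)}\int_{-T}^T |(1-\Delta)^{1/4}u_{\mathrm{free}}|^2 + |(1-\Delta)^{-1/4}\partial_t u_{\mathrm{free}}|^2\,dt\,dx \lesssim C(T)(\|f\|_{\dot H^{1/2}}^2+\|g\|_{\dot H^{-1/2}}^2),$$
which is the inhomogeneous-weight ($w=R^{-1}\chi_{B(x_0,R)}$, a uniform $\mathcal F^1$-type weight) version of the same smoothing estimate for the free equation; the Duhamel correction is controlled by the inhomogeneous local-smoothing estimate applied to the source $\delta V u$ and then by \eqref{wei}, producing a harmless extra factor $C|\delta|\|V\|_{\mathcal F^p}$ that is absorbed for $\varepsilon$ small.

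The main obstacle I anticipate is item (ii) — the inhomogeneous weighted estimate with the \emph{doubly} weighted pair $(|V|\,dx, |V|^{-1}dx)$ uniformly over all of $\mathbb R^n$ — since this is where one cannot simply interpolate or use the spectral theorem (the operator need not be self-adjoint here), and one must instead run the Fefferman--Phong/$A_p$ covering argument together with elliptic regularity directly on the oscillatory kernel of $\sin(t\sqrt{-\Delta^\ast})(-\Delta^\ast)^{-1/2}$ after the Helmholtz split; handling the low-frequency part of this kernel (where $(-\Delta^\ast)^{-1/2}$ is singular) while keeping the bound scale-invariant is the delicate point. A secondary subtlety is making sure the Helmholtz decomposition commutes cleanly with everything — it is bounded on $L^2$-based spaces but one should check it does not destroy the $\mathcal F^p$ structure of the weight, which it does not since it is a constant-coefficient zeroth-order operator.
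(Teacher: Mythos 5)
Your overall architecture---the contraction in $L^2_{x,t}(|V|)$ treating $\delta Vu$ as a source, with the homogeneous weighted estimate and the doubly weighted Duhamel estimate quoted from the earlier sections, then absorption of the $O(|\delta|)$ term to get \eqref{wei}---is exactly the paper's. The genuine gap is in your proof of \eqref{exi}. The fixed-time bound $\|Vu(s)\|_{\dot H^{-1/2}}\lesssim \|V\|_{\mathcal F^p}\,\||V|^{1/2}u(s)\|_{L^2}$ you invoke is, by duality, equivalent to the embedding $\int |V|\,|\phi|^2\,dx\lesssim \|V\|_{\mathcal F^p}\|\phi\|_{\dot H^{1/2}}^2$, and this is \emph{false} for the class $\mathcal F^p$: the class is invariant under the scaling $V\mapsto\lambda^{-2}V(\cdot/\lambda)$ natural to $|x|^{-2}$ (e.g.\ $V=|x|^{-2}\in\mathcal F^p$ for $p<n/2$), whereas $\dot H^{1/2}$ only pairs with weights at the $|x|^{-1}$ scaling; testing with $\phi(\cdot/\lambda)$ gives $\lambda^{n-2}$ on the left against $\lambda^{n-1}$ on the right, so the inequality fails as $\lambda\to0$. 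Hence $Vu(s)$ need not belong to $\dot H^{-1/2}$ at fixed time, and the ``source in $L^1_{\mathrm{loc},t}\dot H^{-1/2}$ plus standard energy theory'' route collapses. The half-derivative gain must come from integration in time: the paper bounds the Duhamel term in $C_t\dot H^{\pm1/2}$ by writing $\sin\theta$ in terms of $e^{\pm i\theta}$, using that $e^{it\sqrt{-\Delta^*}}$ is an $L^2$ isometry, and applying the \emph{dual} of the weighted space-time estimate, $\big\|\int e^{-is\sqrt{-\Delta^*}}F(s)\,ds\big\|_{\dot H^{-1/2}}\lesssim\|V\|^{1/2}_{\mathcal F^p}\|F\|_{L^2_{x,t}(|V|^{-1})}$, with $F=\chi_{[0,t]}\delta Vu$, followed by $\|\delta Vu\|_{L^2_{x,t}(|V|^{-1})}\le|\delta|\,\|u\|_{L^2_{x,t}(|V|)}$; the same device handles $\partial_tu$ through the differentiated Duhamel formula. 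You need this (or an equivalent time-integrated argument) to make \eqref{exi} work.

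Your treatment of \eqref{smoo} is also too quick on two points. First, $R^{-1}\chi_{B(x_0,R)}$ is \emph{not} a uniformly bounded family in $\mathcal F^1$ (its norm is comparable to $R$), so the local energy bound cannot be obtained by feeding this weight into the weighted $L^2$ estimates; one needs the genuine local smoothing estimates with the $|\nabla|^{1/2}$ gain (transferred from the scalar wave equation through the Helmholtz splitting and the $L^2$-orthogonality), and these control only the homogeneous quantities $|\nabla|^{1/2}u$ and $|\nabla|^{-1/2}\partial_tu$. Second, since \eqref{smoo} involves the inhomogeneous operators $(1-\Delta)^{\pm1/4}$, you must (a) justify replacing them by $1+|\nabla|^{1/2}$ and $|\nabla|^{-1/2}$, which the paper does via boundedness of Mikhlin multipliers on Morrey spaces, and (b) control the zero-order piece $\sup_{x_0,R}R^{-1}\int_{B(x_0,R)}\int_{-T}^T|u|^2\,dt\,dx$, which is precisely where the constant $C(T)$ and the restriction to finite time enter (H\"older in $x$ plus the Sobolev embedding $\dot H^{1/2}\hookrightarrow L^{2n/(n-1)}$ together with the uniform-in-time $\dot H^{1/2}$ bound from \eqref{exi}); no global-in-time version of the inhomogeneous-derivative estimate holds, so this step cannot simply be asserted for the free evolution.
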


\begin{rem}
The estimate \eqref{smoo} can be regarded as some sort of \textit{local energy estimate}.
Indeed, it shows that the energy in any cylinder $B(x,R)\times [t_0, t_1] $  decreases uniformly like $\sim\sqrt R$ as $R\to 0$.
\end{rem}

The weighted $L^2$ estimate \eqref{wei} was already shown for the wave equation \eqref{wave_eqn} with $V(x)\sim|x|^{-2}$
and used to get the Strichartz estimates \eqref{Strr} directly without utilizing dispersive estimates (see \cite{BPST-Z,BPST-Z2}).
It was also shown in \cite{RV} that the wave equation \eqref{wave_eqn} has regularity locally as \eqref{smoo}.
Furthermore, as mentioned in \cite{D}, if $\rho$ is any function such that $\sum_{j \in \mathbb{Z}} \|\rho\|^2_{L^{\infty}(|x|\sim2^j)} <\infty$,
one has trivially
\begin{equation*}
\|\rho |x|^{-1/2} u_j\|^2_{L^2_{t}([-T,T];L_x^2(\mathbb{R}^n))} \lesssim \sup_{R>0} \frac{1}{R} \int_{|x|<R} \int_{-T}^{T} |u_j|^2 dtdx.
\end{equation*}
We can take, for example, $\rho(x)=|\log |x||^{-1/2-\delta}$ for any $\delta>0$.
Combining this with \eqref{smoo} gives the following global regularity for \eqref{eq} in terms of weighted $L^2$ norm:
\begin{align*}
\|(1-\Delta)^{1/4}u\|_{L^2_t([-T,T];L_x^2(\rho^2|x|^{-1}))}^2+&\|(1-\Delta)^{-1/4}\partial_tu\|_{L^2_t([-T,T];L_x^2(\rho^2|x|^{-1}))}^2\\
&\lesssim C(T)(\|f\|^2_{\dot H^{1/2}} + \|g\|^2_{\dot H^{-1/2}}).
\end{align*}

\

The rest of this paper is organized as follows.
In Section \ref{sec2}, we represent the solutions to inhomogeneous elastic wave equations utilizing the Fourier transform and provide some useful properties of the \textit{Helmholtz decomposition} by which the action of the Lam\'e operator on vector fields is simplified.
To prove the theorems in Sections \ref{sec5} and \ref{sec6},
 we write the solution to \eqref{eq} as a sum of the solution to the free case plus a Duhamel term regarding the potential term as a source term.
 Then we apply  the relevant weighted $L^2$ and local smoothing estimates obtained in Sections \ref{sec3} and \ref{sec4} to each of the terms.

\subsubsection*{Acknowledgment}
The authors would like to thank the anonymous referee for some valuable comments and suggestions on
an issue about the theory of wave operators by K. Yajima \cite{K}.

\section{Preliminaries} \label{sec2}
\subsection*{Representation of the solution}
In this section, we first give the solution to the following inhomogeneous elastic wave equation in terms of Fourier multipliers.
\begin{equation} \label{inhoeq}
\begin{cases}
	\partial_t^2u(t,x) -\Delta^*u(t,x)  = F(t,x), \\
	u(0,x)=f(x), \quad \partial_t u(0,x)=g(x).
\end{cases}
\end{equation}

Taking the Fourier transform in the spatial variable, we convert \eqref{inhoeq} into the following system of ODE
\begin{equation} \label{Feq}
\begin{cases}
	\partial_t^2  \widehat{u} = -L  \widehat{u} +  \widehat{F}, \\
	 \widehat{u} (0) =  \widehat{f} , \quad \partial_t  \widehat {u}(0) =  \widehat{g},
\end{cases}
\end{equation}
where $L=L(\xi)=\mu |\xi|^2 I_n + (\lambda +\mu) (\xi \xi^t)$ is the (matrix) symbol of $-\Delta^\ast$.
Since $\mu >0$ and $\lambda +2\mu>0$, for every non-zero $\xi$, the matrix $L$ is positive-definite. Indeed, for $z\in \mathbb C^n\setminus\{0\}$, if $z^\ast \xi=0$ then $z^\ast L(\xi)z=\mu |\xi|^2 |z|^2>0$ and if $z^\ast \xi\neq 0$ then
\[	z^\ast  L(\xi)z  \\
	= \mu |\xi|^2 |z|^2 + (\lambda+\mu) |z^\ast  \xi |^2
	\ge (\lambda+2\mu) |z^\ast \xi |^2 >0
\]
by the Schwarz inequality.
Hence there exists a unique positive-definite square root $\sqrt{L}$.
Let us denote by $\sqrt{-\Delta^*}$ and $\sqrt{-\Delta^*}^{-1}$ the Fourier multiplier operators defined by the multipliers $\sqrt{L}$ and $\sqrt{L}^{-1}$, respectively.

By a standard method of reducing \eqref{Feq} to a first-order system and Duhamel's principle, if we set
$$	{\bf v}=( \widehat{u}, \partial_t \widehat {u})^t, \quad
	{\bf b}=( 0,  \widehat{F})^t   \quad
	\text{and}\quad
	{\bf A}=	\begin{pmatrix}
		0	& I_n \\
		-L	& 0
		\end{pmatrix},	$$
we then have
$$	{\bf v}(t)= e^{t{\bf A}} {\bf v}(0) +\int_0^t e^{(t-s){\bf A}} {\bf b}(s)ds,	$$
where $e^{M}=\sum_{k=0}^\infty M^k/k!$ for any square matrix $M$.  Since $e^{\theta}=\cos(-i\theta)+i\sin(-i\theta)$ the first $n$-component of ${\bf v}$ is given by
$$ \widehat {u}(t)= \cos(t\sqrt{L}) \widehat{f} +\sin(t\sqrt{L})\sqrt{L}^{-1}  \widehat {g} +\int_0^t \sin\big((t-s)\sqrt{L}\big)\sqrt{L}^{-1} \widehat {F}(s)ds.$$
Now taking the inverse Fourier transform, the solution to \eqref{inhoeq} is written as
\begin{equation*}
u(t) = \cos(t\sqrt{-\Delta^*}) f + \sin (t\sqrt{-\Delta^*})\sqrt{-\Delta^*}^{-1}g + \int_0^t \sin\big((t-s)\sqrt{-\Delta^*}\big)\sqrt{-\Delta^*}^{-1} F(s)ds,
\end{equation*}
where $\varphi(\sqrt{-\Delta^*})$ denotes the Fourier multiplier operator defined by the symbol $\varphi(\sqrt L)$.

\subsection*{The Helmholtz decomposition}
Next we are concerned with some useful properties of the so called Helmholtz decomposition and related consequences
on the Lam\'e operators and Fourier multipliers.

The action of the Lam\'e operator $\Delta^*$ on vector fields is well understood using the \emph{Helmholtz decomposition}, which states that
every $f\in [L^2(\mathbb R^n)]^n$ can be uniquely decomposed as
$$	f=f_S+f_P,	$$
where $\mathrm{div} f_S=0$ and $f_P=\nabla \varphi$ for some\footnote{In fact, $\varphi$ is a solution of the Poisson equation $\Delta\varphi=\mathrm{div} f$.}
$\varphi\in \dot{H}^1(\mathbb{R}^n)$.
The components $f_S$ and $f_P$ are orthogonal in the sense that $\langle f_S, f_P\rangle_{[L^2(\mathbb R^n)]^n}=0$, where
$$	\langle f,g\rangle_{[L^2(\mathbb R^n)]^n}=\sum_{j=1}^n \langle f_j, g_j\rangle_{L^2(\mathbb R^n)}
$$
is the inner product on $[L^2(\mathbb R^n)]^n$.
For a proof we refer the reader to \cite[pp. 81--83]{Sohr}.
From now on, we will abuse notation and simply write $\langle f, g\rangle _{L^2}$ for this inner product.

By the decomposition it follows that
\begin{equation} \label{decLa}
\Delta^*u = \mu \Delta u_S +(\lambda + 2\mu)\Delta u_P,
\end{equation}
which show that $\Delta^*$ is elliptic under the condition \eqref{ellipticity}.
Unlike the form \eqref{lameform} that is far from being easy to handle,
this representation \eqref{decLa} gives us a possibility of reducing the matter at first
to the level of the Laplacian.
Indeed, by the representation combined with the orthogonality, the elastic wave equation \eqref{inhoeq}
is split into the following system of two wave equations involving separately the vector fields $u_S$ and $u_P$:
\begin{equation} \label{decoup}
	\begin{cases}
		\partial_t^2 u_S - \mu \Delta u_S = F_S, \\
		u_S(0)=f_S, \ \  \partial_t u_S(0)=g_S,
	\end{cases}
\quad
	\begin{cases}
		\partial_t^2 u_P - (\lambda +2\mu) \Delta u_P = F_P, \\
		u_P (0)=f_P, \ \  \partial_t u_P(0)= g_P.
	\end{cases}
\end{equation}
In the proofs of our main results, the final steps after obtaining separated results on $u_S$ and $u_P$ will be to recombine those together in order to get desired results for $u$.
This will be done by developing some non-trivial techniques in later sections.

Another favorable property of the Helmholtz decomposition is that the projectors $f\mapsto f_S$ and $f\mapsto f_P$ (called \textit{Leray projectors}) are in fact Fourier multipliers with matrix valued symbols. This leads to commutativity between the Leray projectors and certain Fourier multipliers $M(D)$ defined by $M(D) f =(M(\xi) \widehat f \,)^\vee$, and as such the $L^2$-orthogonality is available for $M(D)f_S$ and $M(D)f_P$.

\begin{lem}\label{Ortho}
Let $M(\xi)$ be an $n\times n$ matrix valued tempered function defined on $\mathbb R^n$ and let $f\in [L^2(\mathbb R^n)]^n$.  Then we have
\begin{equation}\label{commute}
	(M(D)f)_S=M(D)f_S, \quad (M(D)f)_P=M(D)f_P
\end{equation}
if and only if $M(\xi)=m(\xi)I_n$ for some scalar function $m$. In this case, $m(D)f_S$ and $m(D)f_P$ are orthogonal in $[L^2(\mathbb R^n)]^n$ and
\begin{equation}\label{Pythagorean}
	\|m(D)f\|_{L^2}^2 = \|m(D)f_S\|_{L^2}^2 + \|m(D)f_P\|_{L^2}^2.
\end{equation}
\end{lem}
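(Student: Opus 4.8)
The plan is to prove the two directions of the equivalence separately, and then to derive the Pythagorean identity \eqref{Pythagorean} from the orthogonality.

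First I would recall the explicit symbols of the Leray projectors. The projection onto the divergence-free part is $P_S = I_n - \xi\xi^t/|\xi|^2$ and the projection onto the gradient part is $P_P = \xi\xi^t/|\xi|^2$, acting as Fourier multipliers; these are exactly the orthogonal projections of $\mathbb{C}^n$ onto $\xi^\perp$ and onto the line $\mathbb{R}\xi$, respectively. The identities \eqref{commute} at the Fourier-transform level amount to the pointwise matrix identities $P_S(\xi)M(\xi) = M(\xi)P_S(\xi)$ and $P_P(\xi)M(\xi) = M(\xi)P_P(\xi)$ for a.e. $\xi$. Since $P_S + P_P = I_n$ the two conditions are equivalent to the single requirement that $M(\xi)$ commute with the rank-one projector $P_P(\xi) = \xi\xi^t/|\xi|^2$ for a.e. $\xi$.

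For the ``if'' direction, if $M(\xi) = m(\xi)I_n$ then $M(\xi)$ is scalar and commutes with everything, so \eqref{commute} is immediate. For the ``only if'' direction, fix a $\xi$ for which the commutation holds; writing $e = \xi/|\xi|$, the matrix $M(\xi)$ commutes with the orthogonal projector $ee^t$. A matrix commuting with an orthogonal projector must leave both its range and its kernel invariant, so $M(\xi)$ maps the line $\mathbb{R}e$ into itself and the hyperplane $e^\perp$ into itself. Hence $Me = \lambda(\xi) e$ for some scalar $\lambda(\xi)$. To conclude that $M(\xi)$ is a scalar multiple of $I_n$ on the whole space one runs this argument over a set of directions: by density (a.e.\ $\xi$ works, and the ``bad'' set has measure zero, hence cannot contain an open set), one can choose, for each coordinate direction $\xi^{(k)} = |\xi^{(k)}| e_k$, a representative at which commutation holds, forcing $Me_k = \lambda_k e_k$; applying the same to $\xi = e_i + e_j$ forces the off-diagonal entries to vanish and $\lambda_i = \lambda_j$. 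Therefore $M(\xi) = m(\xi) I_n$ for a.e.\ $\xi$ with $m$ scalar. (Here one must be slightly careful, since $M$ is only a tempered function defined a.e.; the cleanest formulation is that the set of $\xi$ where $M(\xi)$ is \emph{not} a scalar matrix has measure zero.) The main obstacle is precisely this measure-theoretic bookkeeping: \eqref{commute} is an identity of $L^2$ functions, so the pointwise commutation only holds a.e., and one has to argue that ``scalar for a.e.\ direction'' upgrades to ``$m(\xi)I_n$ for a.e.\ $\xi$'' with a measurable $m$ — this is routine but is the only non-formal point.

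Finally, assuming $M(\xi) = m(\xi)I_n$, the decomposition $m(D)f = m(D)f_S + m(D)f_P = (m(D)f)_S + (m(D)f)_P$ shows that $m(D)f_S$ is divergence-free and $m(D)f_P$ is a gradient, so they are orthogonal in $[L^2(\mathbb{R}^n)]^n$ by the orthogonality statement of the Helmholtz decomposition recalled above (or, directly, by Plancherel and the pointwise orthogonality $P_S(\xi)\overline{P_P(\xi)}^t = 0$). Expanding $\|m(D)f\|_{L^2}^2 = \|m(D)f_S + m(D)f_P\|_{L^2}^2$ and dropping the vanishing cross term yields \eqref{Pythagorean}.
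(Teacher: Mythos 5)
Your reduction of \eqref{commute} to the pointwise commutation $\xi\xi^t M(\xi)=M(\xi)\xi\xi^t$, your proof of the ``if'' direction, and your derivation of the orthogonality and of \eqref{Pythagorean} are correct and essentially the same as the paper's argument: the paper likewise computes $\widehat{f_P}(\xi)=|\xi|^{-2}\xi\xi^t\widehat f(\xi)$ and $\widehat{f_S}(\xi)=(I_n-|\xi|^{-2}\xi\xi^t)\widehat f(\xi)$ and then invokes the $L^2$-orthogonality of the Leray projectors.

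The ``only if'' direction, however, has a genuine gap. The hypothesis \eqref{commute} gives, at (almost) every point $\xi$, commutation of $M(\xi)$ with the rank-one projector along that \emph{same} direction $\xi$; it says nothing about how $M(\xi)$ interacts with $\eta\eta^t$ for directions $\eta\neq\xi$. Your step ``one runs this argument over a set of directions'' --- using $\xi=e_k$ to diagonalize and $\xi=e_i+e_j$ to kill off-diagonal entries --- implicitly treats $M(e_1)$, $M(e_2)$ and $M(e_1+e_2)$ as one and the same matrix, which they are not: these are values of $M$ at different points and are completely unrelated (no continuity or constancy in $\xi$ is assumed). What pointwise commutation actually yields is only that $M(\xi)$ preserves the line $\mathbb R\xi$ and the hyperplane $\xi^\perp$, and this cannot be upgraded to $M(\xi)=m(\xi)I_n$: for instance $M(\xi)=|\xi|^{-2}\xi\xi^t$, i.e.\ the Leray projector $f\mapsto f_P$ itself, satisfies \eqref{commute} for every $f$ but is nowhere a scalar multiple of $I_n$. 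So your argument for this direction cannot be completed as written, and in fact it shows that the ``only if'' assertion needs more than \eqref{commute} (it does hold, by your direction-varying argument, if $M$ is a \emph{constant} matrix, since a matrix commuting with every rank-one projector is scalar). To be fair, the paper's own proof is equally terse at this point, asserting without detail that commutation with $\xi\xi^t$ is equivalent to $M_{ij}(\xi)=m(\xi)\delta_{ij}$; note that only the ``if'' direction and \eqref{Pythagorean} are used later in the paper, and those parts you prove correctly and by the same route as the authors.
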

\begin{proof}
Set $f=f_S+f_P=(f-\nabla\varphi)+\nabla\varphi$ for $\varphi\in H^1(\mathbb{R}^n)$.
Taking the Fourier transform on the equation $\mathrm{div} f=\mathrm{div} \nabla\varphi$, it is easy to see that $\widehat\varphi(\xi)=-i|\xi|^{-2}\xi^t\widehat f(\xi)$.
Hence we get
$$ \widehat{f_P}(\xi)=\widehat{\nabla\varphi}(\xi)=i\xi\widehat\varphi(\xi) =|\xi|^{-2}\xi\xi^t \widehat f(\xi)\quad \text{and} \quad \widehat{f_S}(\xi)=(I_n-|\xi|^{-2}\xi\xi^t) \widehat f(\xi),
$$
and it follows that \eqref{commute} holds if and only if $\xi\xi^tM(\xi)=M(\xi)\xi\xi^t$ for all $\xi\in\mathbb R^n$, which is equivalent to $M_{ij}(\xi)=m(\xi)\delta_{ij}$ for some $m$.

Once we have \eqref{commute}, the Pythagorean identity \eqref{Pythagorean} is a direct consequence of the aforementioned $L^2$-orthogonality of the Leray projectors.
\end{proof}

\section{Estimates for the free propagators} \label{sec3}
Now we obtain the weighted $L^2$ and local smoothing estimates for the propagators $e^{it\sqrt{-\Delta^*}}$ and $e^{it\sqrt{-\Delta^*}}\sqrt{-\Delta^*}^{-1}$
that constitute the solution to the homogeneous problem \eqref{inhoeq} with $F=0$.
These estimates will be used in the next sections for the proofs of Theorems \ref{thm1} and \ref{thm2}.
The former is firstly stated as follows.
\begin{prop} \label{prop1}
Let $n \ge 3$ and $V \in \mathcal{F}^p$ for $p>(n-1)/2$. Then we have
\begin{equation} \label{weihomo}
\|e^{it\sqrt{-\Delta^*}}f \|_{L_{x,t}^2(|V|)} \lesssim \|V\|^{1/2}_{\mathcal{F}^p} \|f\|_{\dot H^{1/2}}
\end{equation}
and
\begin{equation} \label{weihomo'}
\|e^{it\sqrt{-\Delta^*}}\sqrt{-\Delta^*}^{-1} g \|_{L_{x,t}^2(|V|)} \lesssim\|V\|^{1/2}_{\mathcal{F}^p} \|g\|_{\dot H^{-1/2}}.
\end{equation}
\end{prop}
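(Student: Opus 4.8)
My plan is to strip off the vector structure with the Helmholtz decomposition, reduce everything to the scalar half-wave propagator, and then recombine the two pieces using the $L^2$-orthogonality packaged in Lemma \ref{Ortho}. The starting point is that the symbol $L(\xi)=\mu|\xi|^2 I_n+(\lambda+\mu)\xi\xi^t$ of $-\Delta^*$ has only the two eigenvalues $\mu|\xi|^2$ (on $\xi^\perp$) and $(\lambda+2\mu)|\xi|^2$ (on the line $\mathbb R\xi$), so
\[
\sqrt{L(\xi)}=\sqrt{\mu}\,|\xi|\,\big(I_n-|\xi|^{-2}\xi\xi^t\big)+\sqrt{\lambda+2\mu}\,|\xi|\,\big(|\xi|^{-2}\xi\xi^t\big),
\]
and more generally $\varphi(\sqrt{L(\xi)})$ is obtained by replacing $\sqrt\mu\,|\xi|$ and $\sqrt{\lambda+2\mu}\,|\xi|$ by $\varphi(\sqrt\mu\,|\xi|)$ and $\varphi(\sqrt{\lambda+2\mu}\,|\xi|)$. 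Recalling from the proof of Lemma \ref{Ortho} that $\widehat{f_S}=(I_n-|\xi|^{-2}\xi\xi^t)\widehat f$ and $\widehat{f_P}=|\xi|^{-2}\xi\xi^t\widehat f$, applying this with $\varphi(s)=e^{its}$ and with $\varphi(s)=e^{its}s^{-1}$ gives, respectively,
\[
e^{it\sqrt{-\Delta^*}}f=e^{it\sqrt{\mu}\,\sqrt{-\Delta}}f_S+e^{it\sqrt{\lambda+2\mu}\,\sqrt{-\Delta}}f_P
\]
and
\[
e^{it\sqrt{-\Delta^*}}\sqrt{-\Delta^*}^{-1}g=\tfrac{1}{\sqrt{\mu}}\,e^{it\sqrt{\mu}\,\sqrt{-\Delta}}\big(\sqrt{-\Delta}^{-1}g_S\big)+\tfrac{1}{\sqrt{\lambda+2\mu}}\,e^{it\sqrt{\lambda+2\mu}\,\sqrt{-\Delta}}\big(\sqrt{-\Delta}^{-1}g_P\big),
\]
where $e^{itc\sqrt{-\Delta}}$ ($c>0$) denotes the scalar half-wave propagator acting componentwise on the $\mathbb R^n$-valued fields.

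The single analytic input I will need is the scalar weighted estimate
\[
\big\|e^{itc\sqrt{-\Delta}}h\big\|_{L^2_{x,t}(|V|)}\lesssim \|V\|_{\mathcal F^p}^{1/2}\,\|h\|_{\dot H^{1/2}}\qquad(h\text{ scalar},\ V\in\mathcal F^p,\ p>(n-1)/2),
\]
with an implicit constant allowed to depend on the fixed speed $c>0$; the componentwise vector-valued version is then immediate with $|V|$ the Frobenius norm. For the pure wave speed $c=1$ this is the half-wave analogue of the weighted $L^2$ bounds used for the wave equation with $|x|^{-2}$-type potentials in \cite{BPST-Z,BPST-Z2}, and the passage to a general Fefferman--Phong weight is exactly where the maximal-function/$A_p$ machinery enters; the $c$-dependence is harmless, as the substitution $t\mapsto ct$ converts $e^{itc\sqrt{-\Delta}}$ into $e^{it\sqrt{-\Delta}}$ at the price of a factor $c^{-1/2}$. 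I expect this scalar weighted estimate (together with its extension to $\mathcal F^p$) to be the main obstacle; the two reductions above and the recombination below are soft.

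Granting it, \eqref{weihomo} follows by the triangle inequality applied to the first displayed identity, giving $\|e^{it\sqrt{-\Delta^*}}f\|_{L^2_{x,t}(|V|)}\lesssim\|V\|_{\mathcal F^p}^{1/2}(\|f_S\|_{\dot H^{1/2}}+\|f_P\|_{\dot H^{1/2}})$, followed by the Pythagorean identity \eqref{Pythagorean} of Lemma \ref{Ortho} with $m(\xi)=|\xi|^{1/2}$, which yields $\|f_S\|_{\dot H^{1/2}}^2+\|f_P\|_{\dot H^{1/2}}^2=\|f\|_{\dot H^{1/2}}^2$ and hence $\|f_S\|_{\dot H^{1/2}}+\|f_P\|_{\dot H^{1/2}}\le\sqrt2\,\|f\|_{\dot H^{1/2}}$. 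For \eqref{weihomo'} I argue identically on the second displayed identity: I apply the scalar estimate to $h=\sqrt{-\Delta}^{-1}g_S$ and $h=\sqrt{-\Delta}^{-1}g_P$, noting $\|\sqrt{-\Delta}^{-1}g_S\|_{\dot H^{1/2}}=\|g_S\|_{\dot H^{-1/2}}$ and likewise for $g_P$, and then invoke \eqref{Pythagorean} with $m(\xi)=|\xi|^{-1/2}$, which gives $\|g_S\|_{\dot H^{-1/2}}^2+\|g_P\|_{\dot H^{-1/2}}^2=\|g\|_{\dot H^{-1/2}}^2$. Finally, this also automatically covers the operators $\cos(t\sqrt{-\Delta^*})$ and $\sin(t\sqrt{-\Delta^*})\sqrt{-\Delta^*}^{-1}$ appearing in the representation of the solution, since these are the real and imaginary parts of $e^{it\sqrt{-\Delta^*}}$ and $e^{it\sqrt{-\Delta^*}}\sqrt{-\Delta^*}^{-1}$.
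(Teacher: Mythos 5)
Your argument is correct and rests on the same pillars as the paper's proof: the Helmholtz decomposition reducing $e^{it\sqrt{-\Delta^*}}$ to scalar wave propagators at the two speeds $\sqrt{\mu}$ and $\sqrt{\lambda+2\mu}$ acting on $f_S$ and $f_P$, a known weighted $L^2$ estimate for the scalar propagator applied componentwise, and recombination through the Pythagorean identity \eqref{Pythagorean} of Lemma \ref{Ortho}. The route differs in one organizational respect: you diagonalize the symbol, $\varphi(\sqrt{L(\xi)})=\varphi(\sqrt{\mu}|\xi|)\,(I_n-|\xi|^{-2}\xi\xi^t)+\varphi(\sqrt{\lambda+2\mu}|\xi|)\,|\xi|^{-2}\xi\xi^t$, and so work directly with the half-wave propagator, whereas the paper derives the cosine and sine decompositions \eqref{cosine}, \eqref{sine} from the decoupled systems \eqref{decoup}, proves the weighted bounds \eqref{cosW}--\eqref{sinW} first, and only then passes to $e^{it\sqrt{-\Delta^*}}$ and to \eqref{weihomo'} via the norm equivalence \eqref{normequi}; your spectral shortcut dispenses with \eqref{normequi} at this stage, which is a small gain in economy (the paper needs \eqref{normequi} later regardless). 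The only point to fix is your assessment of the scalar input as the ``main obstacle'': the estimate $\|e^{itc\sqrt{-\Delta}}h\|_{L^2_{x,t}(|V|)}\lesssim\|V\|_{\mathcal F^p}^{1/2}\|h\|_{\dot H^{1/2}}$ for $V\in\mathcal F^p$, $p>(n-1)/2$, requires no new maximal-function or $A_p$ work here --- it is an immediate consequence of the Ruiz--Vega estimates (2.11) and (2.12) of \cite{RV}, which are already stated for Fefferman--Phong (Morrey--Campanato) weights, since $e^{it\sqrt{-\Delta}}h=\cos(t\sqrt{-\Delta})h+i\sin(t\sqrt{-\Delta})\sqrt{-\Delta}^{-1}(\sqrt{-\Delta}h)$ and $\|\sqrt{-\Delta}h\|_{\dot H^{-1/2}}=\|h\|_{\dot H^{1/2}}$ exactly for scalars, with your rescaling handling the speed $c$; the references \cite{BPST-Z,BPST-Z2} concern the inverse-square case, and the $A_p$/maximal-function machinery of this paper is only needed for the inhomogeneous estimates in Section \ref{sec4}, not for the free propagators. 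With \cite{RV} cited for that input, your proof is complete.
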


\begin{proof}
Once we obtain the following norm equivalence
\begin{equation}\label{normequi}
\|\sqrt{-\Delta^*}f\|_{L^2} \sim \|f\|_{\dot H^1}
\end{equation}
the estimates \eqref{weihomo} and \eqref{weihomo'} are easy consequences of the estimates
\begin{equation} \label{cosW}
\|\cos(t\sqrt{-\Delta^*})f \|_{L_{x,t}^2(|V|)} \lesssim \|V\|^{1/2}_{\mathcal{F}^p} \|f\|_{\dot H^{1/2}}
\end{equation}
and
\begin{equation} \label{sinW}
\|\sin(t\sqrt{-\Delta^*})\sqrt{-\Delta^*}^{-1} g \|_{L_{x,t}^2(|V|)} \lesssim\|V\|^{1/2}_{\mathcal{F}^p} \|g\|_{\dot H^{-1/2}}
\end{equation}
which will be shown later.
Indeed, assuming \eqref{normequi} for the moment, \eqref{sinW} gives
\[	\|\sin(t\sqrt{-\Delta^*})f \|_{L_{x,t}^2(|V|)} \lesssim \|V\|^{1/2}_{\mathcal{F}^p} \|\sqrt{-\Delta^*} f\|_{\dot H^{-1/2}} \sim \|V\|^{1/2}_{\mathcal{F}^p} \|f\|_{\dot H^{1/2}}.	\]
Combining this with \eqref{cosW}, we get \eqref{weihomo}.
Moreover, setting $f=\sqrt{-\Delta^*}^{-1} g$ in the inequality \eqref{weihomo} and using \eqref{normequi} again, we have
\[	\|e^{it\sqrt{-\Delta^*}}\sqrt{-\Delta^*}^{-1} g \|_{L_{x,t}^2(|V|)} \lesssim \|V\|^{1/2}_{\mathcal{F}^p} \|\sqrt{-\Delta^*}^{-1} g\|_{\dot H^{1/2}} \sim \|V\|^{1/2}_{\mathcal{F}^p} \|g\|_{\dot H^{-1/2}}.	\]

Now we show \eqref{normequi}. Since $\sqrt{L}$ is self-adjoint, Parseval's identity gives
\[	\|\sqrt{-\Delta^*}f\|^2_{L^2}
= (2\pi)^{-n} \langle \sqrt{L} \hat f , \sqrt{L} \hat f \,\rangle_{L^2}
= (2\pi)^{-n} \langle \hat f, L \hat f \, \rangle_{L^2}
=\langle f, -\Delta^* f\rangle_{L^2}.
\]
By the Helmholtz decomposition, \eqref{decLa} and Lemma \ref{Ortho}, we then have
\begin{align*}
\langle f, -\Delta^* f\rangle_{L^2}
&= \langle f_P + f_S , -\mu \Delta f_S - (\lambda + 2\mu)\Delta f_P \rangle_{L^2} \\
&= \mu \int_{\mathbb{R}^n} |\nabla f_S|^2 dx+ (\lambda +2\mu) \int_{\mathbb{R}^n} {|\nabla f_P|^2} dx
\sim \|\nabla f\|_{L^2}^2 \sim \|f\|_{\dot H^1}^2.
\end{align*}

It remains to prove \eqref{cosW} and \eqref{sinW}.
To show \eqref{cosW}, we first set $F=g=0$ in the equation \eqref{inhoeq}.
The solution is then written as the sum of the corresponding solutions to the couple of equations \eqref{decoup}:
\begin{equation}\label{cosine}
\cos (t\sqrt{-\Delta^\ast})f =\cos (t\sqrt{-\mu\Delta})f_S + \cos (t\sqrt{-(\lambda+2\mu)\Delta})f_P.
\end{equation}
Hence, assuming the following estimates
\begin{gather}
\label{weihoS}	\|\cos (t\sqrt{-\mu\Delta}) f_S \|_{L_{x,t}^2(|V|)} \lesssim \|V\|^{1/2}_{\mathcal{F}^p}  \|f_S\|_{\dot H^{1/2}}, \\
\label{weihoP}	\|\cos (t\sqrt{-(\lambda+2\mu)\Delta}) f_P \|_{L_{x,t}^2(|V|)} \lesssim \|V\|^{1/2}_{\mathcal{F}^p}  \|f_P\|_{\dot H^{1/2}}
\end{gather}
for the moment and
making use of the orthogonality \eqref{Pythagorean}, we have
\[	\|\cos (t\sqrt{-\Delta^\ast})f\|_{L_{x,t}^2(|V|)}
\lesssim \|V\|^{1/2}_{\mathcal{F}^p} \big( \|f_S\|_{\dot H^{1/2}} + \|f_P\|_{\dot H^{1/2}} \big)
\lesssim \|V\|^{1/2}_{\mathcal{F}^p} \|f\|_{\dot H^{1/2}},
\]
which gives \eqref{cosW}.
Now we need only show \eqref{weihoS} since the proof of \eqref{weihoP} is similar.
But this follows from applying the known estimate \eqref{cosW} with $\Delta^*$ replaced by the Laplacian $\Delta$ (see (2.11) in \cite{RV}):
\begin{align*}
	\| \cos(t\sqrt{-\mu\Delta}) f_S \|_{L^2_{x,t}(|V|)}
	&= \bigg( \sum_{j=1}^n \| \cos(t\sqrt{-\mu\Delta})(f_S)_j \|^2_{L^2_{x,t}(|V|)} \bigg)^{1/2} \\
	&\lesssim \bigg( \|V\|_{\mathcal{F}^p}\sum_{j=1}^n \|(f_S)_j\|^2_{\dot H^{1/2}} \bigg)^{1/2}
	= \|V\|^{1/2}_{\mathcal{F}^p}\|f_S\|_{\dot H^{1/2}}.
\end{align*}
Similarly we prove \eqref{sinW}; first set $F=f=0$ in \eqref{inhoeq}, and
look at \eqref{decoup} to see that
\begin{equation}\label{sine}
	\begin{aligned}
	&\sin(t\sqrt{-\Delta^*})\sqrt{-\Delta^*}^{-1} g \\
	&\quad= \sin(t\sqrt{-\mu\Delta})\sqrt{-\mu\Delta}^{-1} g_S
	+ \sin(t\sqrt{-(\lambda+2\mu) \Delta})\sqrt{-(\lambda+2\mu)\Delta}^{-1} g_P.
	\end{aligned}
\end{equation}
On the right side of \eqref{sine} we apply the known estimate \eqref{sinW} for the Laplacian $\Delta$ (see (2.12) in \cite{RV})
and the orthogonality \eqref{Pythagorean} to get \eqref{sinW}.
\end{proof}

Next we obtain the following local smoothing estimates using the same argument employed to prove Proposition \ref{prop1}.

\begin{prop} \label{prop2}
Let $n \ge 3$. Then we have
\begin{equation}\label{smoohom}
\sup_{x_0 \in \mathbb{R}^n, R>0} \frac{1}{R} \int_{B(x_0, R)}\int_{-\infty}^{\infty} \big| |\nabla|^{1/2} e^{it \sqrt{-\Delta^*}}f \big|^2 dtdx \lesssim \|f\|^2_{\dot H^{1/2}}
\end{equation}
and
\begin{equation}\label{smoohom'}
\sup_{x_0 \in \mathbb{R}^n, R>0} \frac{1}{R} \int_{B(x_0,R)}\int_{-\infty}^{\infty} \big| |\nabla|^{1/2} e^{it \sqrt{-\Delta^*}}
\sqrt{-\Delta^*}^{-1} g \big|^2 dtdx \lesssim \|g\|^2_{\dot H^{-1/2}}.
\end{equation}
\end{prop}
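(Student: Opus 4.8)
The plan is to mimic exactly the argument of Proposition \ref{prop1}, replacing the weighted $L^2$ estimates for the scalar wave propagators by the corresponding \emph{local smoothing} (Morawetz-type) estimates for the Laplacian. First I would recall from the Helmholtz decomposition and the splitting \eqref{decoup} that
\begin{equation*}
e^{it\sqrt{-\Delta^*}}f = e^{it\sqrt{-\mu\Delta}}f_S + e^{it\sqrt{-(\lambda+2\mu)\Delta}}f_P,
\end{equation*}
and similarly for the second propagator using \eqref{sine}; here I am abusing notation by writing $e^{it\sqrt{-c\Delta}}$ for the solution operator, but since only the modulus $|\,|\nabla|^{1/2}(\cdot)|$ appears under the integral one may just work with $\cos$ and $\sin$ separately, or absorb the half-wave decomposition harmlessly. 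The key known input is the scalar local smoothing estimate
\begin{equation*}
\sup_{x_0\in\mathbb R^n,\,R>0}\frac1R\int_{B(x_0,R)}\int_{-\infty}^{\infty}\big|\,|\nabla|^{1/2}e^{it\sqrt{-c\Delta}}h\,\big|^2\,dtdx \lesssim \|h\|_{\dot H^{1/2}}^2,
\end{equation*}
valid for any fixed $c>0$ (the constant $c$ only rescales $t$, so it is harmless); this is the classical Kato-smoothing/Morawetz estimate for the wave equation, and the paper's Section \ref{sec4} presumably records it — I would cite it there, exactly as \eqref{cosW}--\eqref{sinW} were imported from \cite{RV}. Applying this componentwise to $f_S$ and $f_P$ (each component is a scalar function) and to $\sqrt{-c\Delta}^{-1}g_S$, $\sqrt{-c\Delta}^{-1}g_P$ gives the two pieces.

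The one genuinely non-routine point is recombining the $S$ and $P$ pieces. In Proposition \ref{prop1} this was done via the Pythagorean identity \eqref{Pythagorean}, which applies because $|V|\,dx\,dt$ pairs against an $L^2$ norm and $m(D)f_S\perp m(D)f_P$ in $L^2$. Here the left-hand side is $\frac1R\int_{B(x_0,R)}\int|\,|\nabla|^{1/2}(\cdot)|^2$, which is still an $L^2$-type quantity — it is $\|\,\mathbf 1_{B(x_0,R)}(x)\,|\nabla|^{1/2}(\cdot)\,\|_{L^2_{x,t}}^2$ up to the factor $1/R$ — but $|\nabla|^{1/2}$ is a scalar Fourier multiplier $m(\xi)=|\xi|^{1/2}$, so by Lemma \ref{Ortho} the vector fields $|\nabla|^{1/2}e^{it\sqrt{-\mu\Delta}}f_S$ and $|\nabla|^{1/2}e^{it\sqrt{-(\lambda+2\mu)\Delta}}f_P$ are orthogonal in $[L^2(\mathbb R^n)]^n$ for each fixed $t$. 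However, multiplication by the cutoff $\mathbf 1_{B(x_0,R)}$ does \emph{not} preserve this orthogonality, so one cannot directly invoke a Pythagorean identity on the truncated norm. Instead I would simply use the triangle inequality in $L^2(B(x_0,R)\times\mathbb R;dxdt)$:
\begin{equation*}
\big\|\mathbf 1_{B(x_0,R)}|\nabla|^{1/2}e^{it\sqrt{-\Delta^*}}f\big\|_{L^2_{x,t}} \le \big\|\mathbf 1_{B(x_0,R)}|\nabla|^{1/2}e^{it\sqrt{-\mu\Delta}}f_S\big\|_{L^2_{x,t}} + \big\|\mathbf 1_{B(x_0,R)}|\nabla|^{1/2}e^{it\sqrt{-(\lambda+2\mu)\Delta}}f_P\big\|_{L^2_{x,t}},
\end{equation*}
divide by $\sqrt R$, take the supremum over $x_0,R$, apply the scalar estimate to each term, and finish with $\|f_S\|_{\dot H^{1/2}}^2 + \|f_P\|_{\dot H^{1/2}}^2 \le \|f_S\|_{\dot H^{1/2}}^2 + \|f_P\|_{\dot H^{1/2}}^2 = \|f\|_{\dot H^{1/2}}^2$, where the last equality is the genuine Pythagorean identity \eqref{Pythagorean} applied with $m(\xi)=|\xi|^{1/2}$ to the \emph{untruncated} homogeneous norm. (One loses a harmless factor of $2$ from the triangle inequality, absorbed by $\lesssim$.) The estimate \eqref{smoohom'} is handled identically, starting from \eqref{sine}, replacing $f_S,f_P$ by $g_S,g_P$, using $\sqrt{-c\Delta}^{-1}$, and noting $\||\nabla|^{1/2}\sqrt{-c\Delta}^{-1}g_\bullet\|_{L^2}\sim\|g_\bullet\|_{\dot H^{-1/2}}$.

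Thus the main obstacle is purely cosmetic: recognizing that the local-smoothing quantity, although it involves a spatial cutoff that destroys exact orthogonality of the Leray components, is still subadditive enough that the triangle inequality plus the untruncated Pythagorean identity \eqref{Pythagorean} suffices, so that no new idea beyond the scalar estimate and Lemma \ref{Ortho} is needed — exactly parallel to, and in fact simpler than, the proof of Proposition \ref{prop1}.
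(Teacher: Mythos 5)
Your proposal is correct and follows essentially the same route as the paper: Helmholtz decomposition of the propagator, the scalar local smoothing estimates of Ruiz--Vega ((2.21)--(2.22) in \cite{RV}, cited in Section \ref{sec3} rather than Section \ref{sec4}) applied componentwise to the $S$ and $P$ pieces, then the triangle inequality on the truncated norm and the untruncated Pythagorean identity \eqref{Pythagorean} via Lemma \ref{Ortho} to recombine. The only cosmetic difference is that the paper first reduces $e^{it\sqrt{-\Delta^*}}$ to the $\cos$ and $\sin$ propagators through the norm equivalence \eqref{normequi} and the decompositions \eqref{cosine}, \eqref{sine}, whereas you work with the half-wave operator directly; both are equally valid.
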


\begin{proof}
Making use of \eqref{normequi}, the estimates \eqref{smoohom} and \eqref{smoohom'} are easy consequences of the following estimates
\begin{equation}\label{cosmoo}
\sup_{x_0, R} \frac{1}{R} \int_{B(x_0, R)}\int_{-\infty}^{\infty} \big| |\nabla|^{1/2} \cos(t \sqrt{-\Delta^*})f \big|^2 dtdx \lesssim \|f\|^2_{\dot H^{1/2}}
\end{equation}
and
\begin{equation}\label{sismoo}
\sup_{x_0, R} \frac{1}{R} \int_{B(x_0,R)}\int_{-\infty}^{\infty} \big| |\nabla|^{1/2} \sin(t \sqrt{-\Delta^*})
\sqrt{-\Delta^*}^{-1} g \big|^2 dtdx \lesssim \|g\|^2_{\dot H^{-1/2}}.
\end{equation}
Indeed, by using \eqref{normequi}, \eqref{sismoo} gives
\begin{equation*}
\sup_{x_0 , R} \frac{1}{R} \int_{B(x_0,R)}\int_{-\infty}^{\infty} \big| |\nabla|^{1/2} \sin(t \sqrt{-\Delta^*}) f \big|^2 dtdx \lesssim \|\sqrt{-\Delta^*}f\|^2_{\dot H^{-1/2}} \sim \|f\|^2_{\dot H^{1/2}}.
\end{equation*}
Combining this with \eqref{cosmoo}, we have \eqref{smoohom}.
Also, using \eqref{smoohom} with $f=\sqrt{-\Delta^*}^{-1} g$ and \eqref{normequi}, we get
\begin{equation*}
\sup_{x_0 , R} \frac{1}{R} \int_{B(x_0, R)}\int_{-\infty}^{\infty} \big| |\nabla|^{1/2} e^{it \sqrt{-\Delta^*}}\sqrt{-\Delta^*}^{-1} g \big|^2 dtdx
\lesssim \|\sqrt{-\Delta^*}^{-1} g\|^2_{\dot H^{1/2}} \sim \|g\|^2_{\dot H^{-1/2}}.
\end{equation*}

Finally, it remains to show \eqref{cosmoo} and \eqref{sismoo}.
By \eqref{cosine} and the known estimate \eqref{cosmoo} with $\Delta^*$ replaced by $\Delta$ (see (2.21) in \cite{RV}), we have
$$	\sup_{x_0, R} \frac{1}{R} \int_{B(x_0, R)}\int_{-\infty}^{\infty} \big| |\nabla|^{1/2} \cos(t \sqrt{-\Delta^*})f \big|^2 dtdx
	\lesssim \|f_S\|^2_{\dot H^{1/2}} + \|f_P\|^2_{\dot H^{1/2}},
$$
and therefore \eqref{cosmoo} follows from Lemma \ref{Ortho}.
The second estimate \eqref{sismoo} follows in a similar manner using \eqref{sine} and the corresponding estimate (2.22) in \cite{RV}
for the Laplacian $\Delta$.
\end{proof}

\section{Inhomogeneous estimates} \label{sec4}
In addition to the estimates in the previous section, we need to obtain the corresponding estimates
for the inhomogeneous problem \eqref{inhoeq} with the zero initial data $f=g=0$.
Analogously to the preceding propositions, we will achieve this by reducing the matter at first
to the level of the Laplacian,
and then recombining the resulting respective estimates for $u_S$ and $u_P$ to yield the estimates for the solution $u$.
But this recombination is more delicate in the inhomogeneous case and requires some non-trivial techniques that consist of two kind of arguments; one from the theory of maximal functions involving the $A_p$ class and the other from elliptic regularity estimates.
Even if these are relatively well-known separately, their combination seems new in the present context.

\subsection*{Elliptic regularity estimates and maximal functions}
As a preliminary step, we first obtain the following elliptic regularity estimates (\textit{cf}. \cite{BFRVV,Co})
combined with the theory of maximal functions and $A_p$ class for weighted estimates.
\begin{lem} \label{lemma}
Let $n \ge 3$ and $V \in \mathcal{F}^p$ for $1<p\le n/2$.
For any $n$-tuple of tempered distributions $F=(F_1, \ldots, F_n) \in [\mathcal S'(\mathbb{R}^n)]^n$, if
\begin{equation} \label{poi}
	-\Delta \psi = {\rm div} F
\end{equation}
then we have
\begin{equation} \label{lem}
	\|\nabla \psi\|_{L^2(W^{\pm1})} \lesssim
	\|F\|_{L^2(W^{\pm1})}
\end{equation}
for $W = M(|V|^{\delta})^{1/\delta}$ with $1<\delta<p$.
Here, $M(f)$ denotes the Hardy-Littlewood maximal function of $f$ and the implicit constant is independent of $V$.
\end{lem}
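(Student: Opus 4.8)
The plan is to first establish the \emph{unweighted} estimate (the case $W\equiv 1$), which is the classical Calder\'on--Zygmund bound: from $-\Delta\psi = \mathrm{div}\,F$ we get $\widehat{\nabla\psi}(\xi) = \xi\,|\xi|^{-2}\,\xi\cdot\widehat F(\xi)$, so $\nabla\psi$ is obtained from $F$ by a matrix of zeroth-order Riesz-type Fourier multipliers $\xi_j\xi_k/|\xi|^2$; these are bounded on $L^2$ (trivially, since the symbols are bounded), and more importantly on $L^2(w)$ for every $w\in A_2$, with a constant depending only on the $A_2$ characteristic $[w]_{A_2}$. This is the standard weighted Calder\'on--Zygmund theory for Riesz transforms. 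So the estimate \eqref{lem} will follow at once once I verify that the specific weights $W = M(|V|^\delta)^{1/\delta}$ and $W^{-1}$ both lie in $A_2$, with $A_2$-characteristic controlled independently of $V$ (the constant may of course depend on $n,p,\delta$).

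The heart of the matter is therefore the weight step. The key classical fact is Coifman--Rochberg: for any locally integrable $h$ and any $0<\alpha<1$, the function $(Mh)^\alpha$ is an $A_1$ weight, with $[(Mh)^\alpha]_{A_1}$ bounded by a constant depending only on $\alpha$ and $n$ (and not on $h$). Apply this with $h=|V|^\delta$ and $\alpha = 1/\delta \in (0,1)$ (using $\delta>1$): this shows $W = (M(|V|^\delta))^{1/\delta} \in A_1 \subset A_2$, uniformly in $V$. For the reciprocal weight $W^{-1}$, recall that $A_1$ weights $w$ satisfy $w^{-1}\in A_\infty$ but not in general $w^{-1}\in A_2$; however, what we need is weaker in one direction and we have room to spare, because we only need $W$ raised to a power strictly between $0$ and $1$. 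The clean way: observe that since $V\in\mathcal F^p$, by H\"older the exponent $\delta<p$ gives the self-improving bound $M(|V|^\delta)^{1/\delta}\lesssim \|V\|_{\mathcal F^p}\,|x|^{\cdots}$—more usefully, choose an auxiliary exponent and write $W^{-1}$ as a power of a maximal function as well. Concretely, pick $1<\delta<p$ and also use that $M(|V|^\delta)^{1/\delta}$, being an $A_1$ weight, has the reverse H\"older property, which places $W$ in $A_1$ and hence $W^{1+\epsilon}\in A_1$ for small $\epsilon$; duality of $A_p$ classes then gives $W^{-1}\in A_2$ provided $W\in A_2$ and $W\in RH_2$ — and $A_1 \subset RH_s$ for all $s<\infty$ is not quite true, so the honest route is: $w\in A_1 \Rightarrow w^{-1}\in A_\infty$, and combined with $w^{-1}\in L^1_{loc}$ plus the explicit $\mathcal F^p$-scaling of $W$ one checks the $A_2$ balance condition $\big(\fint_B W\big)\big(\fint_B W^{-1}\big)\lesssim 1$ directly. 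I would present this last verification as the technical core.

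The main obstacle, then, is precisely controlling $W^{-1}\in A_2$ with a \emph{$V$-independent} constant; the $+1$ case ($W\in A_2$) is immediate from Coifman--Rochberg. I expect to handle $W^{-1}$ by combining (i) the Coifman--Rochberg bound $[W]_{A_1}\le C(n,\delta)$, (ii) the resulting pointwise inequality $W(x) \le C\,\fint_{B} W$ for every ball $B\ni x$, which forces $\fint_B W^{-1} \le C\, (\operatorname{ess\,inf}_B W)^{-1} \le C^2 (\fint_B W)^{-1}$, and hence $\big(\fint_B W\big)\big(\fint_B W^{-1}\big)\le C^2$, i.e.\ $W^{-1}\in A_2$ with characteristic at most $C(n,\delta)^2$, again independent of $V$. (Here $\operatorname{ess\,inf}_B W>0$ a.e.-finiteness is guaranteed since $|V|^\delta\in L^1_{loc}$ by the $\mathcal F^p$ assumption and $\delta<p$.) With both $W^{\pm1}\in A_2$ uniformly, the weighted $L^2$-boundedness of the Riesz-type multipliers $\xi_j\xi_k/|\xi|^2$ delivers \eqref{lem}, and the implicit constant depends only on $n$, $p$, $\delta$ and the universal $A_2$-bound — in particular not on $V$, as claimed.
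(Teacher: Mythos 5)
Your proposal is essentially the paper's own argument: write $\nabla\psi$ as compositions of Riesz transforms (symbols $\xi_j\xi_k/|\xi|^2$), apply the weighted $L^2(w)$ bound for Riesz transforms with $w\in A_2$, and obtain a $V$-independent constant from the Coifman--Rochberg theorem, which gives $[W]_{A_1}\le C(n,\delta)$ uniformly once one notes $M(|V|^\delta)<\infty$ a.e.\ (as $|V|^\delta\in L^1_{\mathrm{loc}}$ since $\delta<p$); this is exactly Lemma \ref{uniformA1} in the paper. The one place you go astray is the discussion of $W^{-1}$: it is not a delicate ``technical core.'' The $A_2$ condition is symmetric under $w\mapsto w^{-1}$, so $[W^{-1}]_{A_2}=[W]_{A_2}\le[W]_{A_1}$, which is precisely the paper's one-line observation; in particular your claim that $w\in A_1$ does not in general give $w^{-1}\in A_2$ is false. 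Moreover your item (ii) states the $A_1$ property in the reversed direction: $[W]_{A_1}\le C$ gives $\frac{1}{|B|}\int_B W\,dx\le C\,\operatorname{ess\,inf}_B W$, not $W(x)\le C\,\frac{1}{|B|}\int_B W\,dx$ (the latter fails already for $|x|^{-\alpha}\in A_1$), and as literally stated (ii) does not imply your displayed chain. The chain itself, however, is the standard proof that $A_1\subset A_2$ combined with the symmetry, so after correcting that direction your argument is complete and coincides with the paper's proof.
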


Before we prove the lemma, let us record some facts about the weights.
Firstly, a nonnegative locally integrable function $w\colon \mathbb R^n\to [0,\infty]$
is called an $A_1$ weight whenever there is a constant $c$ such that
\[	Mw(x)\le c w(x)		\]
almost everywhere or, equivalently,
\[	[w]_{A_1}:=\sup_{B\,\, \text{balls in} \,\,\mathbb{R}^n} \bigg( \frac{1}{|B|} \int_{B} w (x) dx \bigg) \|w^{-1}\|_{L^\infty(B)} <\infty.	\]
For $1<p<\infty$, $w$ is also said to be of class $A_p$ if
\[	[w]_{A_p}:=\sup_{B\,\, \text{balls in} \,\,\mathbb{R}^n} \bigg( \frac{1}{|B|} \int_{B} w (x) dx \bigg)  \bigg(\frac{1}{|B|} \int_B w(x)^{-\frac{1}{p-1}} dx \bigg)^{p-1} < \infty.	\]
It is easy to see that if $1\le p<q<\infty$ then $A_p\subset A_q$ with $[w]_{A_q}\le [w]_{A_p}$.

Secondly, if $1<p\leq n/2$ and $V \in \mathcal{F}^{p}$, then for any $1< \delta < p$
\begin{equation*}
W=M(|V|^\delta)^{1/\delta} \in A_1 \cap \mathcal{F}^{p}
\end{equation*}
and
\begin{equation} \label{WV}
\|W\|_{\mathcal{F}^p} \lesssim \|V\|_{\mathcal{F}^p}.
\end{equation}
See \cite[Lemma 1]{CF} for the proof of this useful property of the Fefferman-Phong class.

Finally, the following lemma, which is taken from \cite[pp. 214--215]{S} (see also \cite[Proposition 2]{CR}), enables us to make the implicit constant in \eqref{lem} independent of $V$.
\begin{lem}\label{uniformA1}
	Let $\delta>1$. For any nonnegative function $w$, if $Mw <\infty$ almost everywhere, then $(Mw)^{1/\delta}\in A_1$ and $[w]_{A_1}$ is bounded by a constant independent of $w$.
\end{lem}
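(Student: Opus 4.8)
The statement is the classical Coifman--Rochberg fact: a fractional power $(Mw)^{\gamma}$ with $\gamma=1/\delta\in(0,1)$ of a maximal function is an $A_1$ weight, and — the point stressed here — its $A_1$ characteristic is a purely dimensional constant (we read the last clause as asserting $[(Mw)^{1/\delta}]_{A_1}\le C(n,\delta)$). Writing $v=(Mw)^{\gamma}$, the plan is to verify the $A_1$ condition in the pointwise--average form: there is $C=C(n,\gamma)$ such that
\[ \frac{1}{|B|}\int_B v\,dy \le C\,\operatorname{ess\,inf}_B v \quad\text{for every ball }B. \]
For $v=(Mw)^{\gamma}$ this is equivalent, with comparable constants, to the two-average definition of $[v]_{A_1}$ used in the paper, and since $Mw$ is lower semicontinuous one has $\operatorname{ess\,inf}_B v=\inf_B v=(\inf_B Mw)^{\gamma}$; the hypothesis $Mw<\infty$ a.e.\ is exactly what makes $v$ a genuine, a.e.-finite weight in the first place.

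Fix $B=B(x_0,r)$ and split $w=w_1+w_2$ with $w_1=w\chi_{2B}$ and $w_2=w\chi_{\mathbb R^n\setminus 2B}$. Since $0<\gamma<1$, subadditivity of $t\mapsto t^{\gamma}$ gives $v\le(Mw_1)^{\gamma}+(Mw_2)^{\gamma}$ pointwise, so it suffices to bound the $B$-average of each summand by $C(n,\gamma)(\inf_B Mw)^{\gamma}$. For the \emph{local part} $w_1$ I would combine the weak-type $(1,1)$ bound for $M$ (dimensional constant) with Kolmogorov's inequality (valid for exponents in $(0,1)$) to get
\[ \frac{1}{|B|}\int_B (Mw_1)^{\gamma}\,dy \lesssim_{n,\gamma} \Big(\frac{1}{|B|}\int_{2B}w\,dy\Big)^{\gamma}, \]
and then note $\frac{1}{|B|}\int_{2B}w\lesssim_n\inf_{z\in B}Mw(z)$ because every $z\in B$ contains $2B$ inside a concentric dilate of a ball about $z$. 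For the \emph{tail part} $w_2$ I would prove the pointwise comparison $Mw_2(x)\lesssim_n\inf_{z\in B}Mw(z)$ for all $x\in B$: any ball $B(x,s)$ meeting $\operatorname{supp}w_2$ must have $s\gtrsim r$, hence is contained in a bounded dilate of $B(z,s)$ for every $z\in B$, so its $w$-average is $\lesssim_n Mw(z)$; this at once gives $\frac{1}{|B|}\int_B (Mw_2)^{\gamma}\le C(n)(\inf_B Mw)^{\gamma}$. Summing the two contributions yields the displayed $A_1$ estimate with $C=C(n,\gamma)=C(n,\delta)$, visibly independent of $w$.

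I do not expect a genuine obstacle: this is a textbook argument, and the entire substance of the statement lies in the bookkeeping of constants. The one thing to watch is that every ingredient — the weak-$(1,1)$ inequality for $M$, Kolmogorov's inequality, and the geometric ball comparisons — be invoked with a constant depending on $n$ and $\gamma$ only, never on $w$; granted that, the universality of $[(Mw)^{\gamma}]_{A_1}$ is automatic. A secondary, routine point is the standard equivalence (with comparable constants) between the average-versus-essential-infimum form of $A_1$ used above and the two-average form $[v]_{A_1}<\infty$ appearing in the paper.
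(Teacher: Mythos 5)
Your proof is correct: the paper does not prove this lemma at all but simply quotes it from Stein (pp.~214--215) and Coifman--Rochberg, and your argument --- splitting $w$ into a local part handled by Kolmogorov's inequality plus the weak-$(1,1)$ bound for $M$, and a tail part handled by the pointwise comparison $Mw_2(x)\lesssim_n\inf_B Mw$ --- is exactly the classical proof in those references, with every constant depending only on $n$ and $\gamma=1/\delta$. (Note also that the paper's displayed definition of $[v]_{A_1}$ is literally the average-over-essential-infimum condition you verify, since $\|v^{-1}\|_{L^\infty(B)}=(\operatorname{ess\,inf}_B v)^{-1}$, so no equivalence-of-definitions step is even needed; and the stated conclusion ``$[w]_{A_1}$ bounded'' is a typo for $[(Mw)^{1/\delta}]_{A_1}$, which you read correctly.)
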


\begin{proof}[Proof of Lemma \ref{lemma}]
Taking the Fourier transform on \eqref{poi} it is easy to see that
\begin{equation}\label{Rzpsi}
	\partial_j\psi = \sum_{k=1}^n R_jR_kF_k,
\end{equation}
where $R_j$ is the Riesz transform defined by $ \widehat{R_j\varphi}(\xi) =i\xi_j|\xi|^{-1}  \widehat{\varphi} (\xi)$.
We shall then use the weighted $L^2$ estimate
\begin{equation}\label{rieszw}
	\|R_jf\|_{L^2(w)}\lesssim [w]_{A_2} \|f\|_{L^2(w)},
\end{equation}
which holds whenever $w\in A_2$ (see \cite{P}).

Since $W^{\pm1}\in A_2$ and $[W^{-1}]_{A_2}=[W]_{A_2}\le[W]_{A_1}$, it follows now from \eqref{Rzpsi}, \eqref{rieszw} and Minkowski's inequality that
$$	\|\nabla \psi\|^2_{L^2(W^{\pm1})}
	\le \sum_{j=1}^n \bigg( \sum_{k=1}^n \|R_j R_k F_k\|_{L^2(W^{\pm1})} \bigg)^2
	\lesssim
	[W]^2_{A_1} \|F\|^2_{L^2(W^{\pm1})}.
$$
Since $V\in \mathcal F^p$ and $1<\delta<p$, it is easy to see that $M(|V|^\delta)(x)<\infty$ for every Lebesgue point $x$ of $|V|^\delta$. Hence, $[W]_{A_1}$ is uniformly bounded by Lemma \ref{uniformA1}. \end{proof}

\subsection*{Inhomogeneous estimates}
Now we are ready to obtain the following desired estimates in the rest of this section.

\begin{prop} \label{prop3}
	Let $n \ge 3$ and $V \in \mathcal{F}^p$ for $p>(n-1)/2$. If $u$ is a solution to \eqref{inhoeq} with $f=g=0$, then we have
\begin{equation} \label{weiinho}
	\|u\|_{L^2_{x,t}(|V|)} \lesssim \|V\|_{\mathcal{F}^p}\|F\|_{L^2_{x,t}(|V|^{-1})}
\end{equation}
and
	\begin{equation} \label{smooinho}
	\sup_{x_0 \in \mathbb{R}^n, R>0} \frac{1}{R} \int_{B(x_0,R)}\int_{-\infty}^{\infty} \big| |\nabla|^{1/2}u\big|^2 dtdx \lesssim \|V\|_{\mathcal{F}^p}\|F\|^2_{L^2_{x,t} (|V|^{-1})}.
	\end{equation}
\end{prop}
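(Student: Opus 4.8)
The plan is to mimic the strategy used for Propositions \ref{prop1} and \ref{prop2}: decompose the inhomogeneous solution via the Helmholtz decomposition, invoke the known scalar estimates for the Laplacian, and then recombine. Recall that the solution with $f=g=0$ is
\[
u(t)=\int_0^t \sin\big((t-s)\sqrt{-\Delta^\ast}\big)\sqrt{-\Delta^\ast}^{-1}F(s)\,ds,
\]
and by \eqref{decoup} this splits as $u=u_S+u_P$ where $u_S$, $u_P$ solve the scalar (vector-component-wise) wave equations driven by $F_S$, $F_P$ with wave speeds $\mu$ and $\lambda+2\mu$ respectively. For each of $u_S$ and $u_P$ the corresponding estimates at the level of the Laplacian are known — the inhomogeneous weighted $L^2$ estimate and the inhomogeneous local smoothing estimate for the perturbed wave equation (these are the analogues of the estimates in \cite{RV}, applied componentwise). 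So componentwise one has, e.g., $\|u_S\|_{L^2_{x,t}(|V|)}\lesssim \|V\|_{\mathcal F^p}\|F_S\|_{L^2_{x,t}(|V|^{-1})}$ and similarly for $u_P$, and likewise for \eqref{smooinho}.

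The first recombination issue is on the \emph{output} side: since $u=u_S+u_P$, the triangle inequality in $L^2_{x,t}(|V|)$ (resp.\ in the local-smoothing norm) gives
\[
\|u\|_{L^2_{x,t}(|V|)}\lesssim \|u_S\|_{L^2_{x,t}(|V|)}+\|u_P\|_{L^2_{x,t}(|V|)}\lesssim \|V\|_{\mathcal F^p}\big(\|F_S\|_{L^2_{x,t}(|V|^{-1})}+\|F_P\|_{L^2_{x,t}(|V|^{-1})}\big),
\]
so everything reduces to controlling $\|F_S\|_{L^2_{x,t}(|V|^{-1})}$ and $\|F_P\|_{L^2_{x,t}(|V|^{-1})}$ by $\|F\|_{L^2_{x,t}(|V|^{-1})}$. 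This is the \emph{input} side, and this is where the weight $|V|^{-1}$ causes trouble: the Leray projectors $f\mapsto f_S$, $f\mapsto f_P$ are Calder\'on--Zygmund operators (built from $I_n$ and the second Riesz transforms $R_jR_k$, as seen in the proof of Lemma \ref{lemma}), but $|V|^{-1}$ is in general \emph{not} an $A_2$ weight, so one cannot directly apply a weighted Calder\'on--Zygmund bound. This is exactly the obstacle that Lemma \ref{lemma} is designed to circumvent.

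The key step, and the main obstacle, is therefore to bridge the gap between the weight $|V|^{-1}$ and an $A_2$ weight by interposing the majorant $W=M(|V|^\delta)^{1/\delta}$ for some $1<\delta<p$. Since $|V|\le W$ pointwise (the maximal function dominates), we have $|V|^{-1}\ge W^{-1}$, hence $\|F\|_{L^2_{x,t}(|V|^{-1})}\ge \|F\|_{L^2_{x,t}(W^{-1})}$; going the other direction, $\|F_P\|_{L^2_{x,t}(|V|^{-1})}\le\|F_P\|_{L^2_{x,t}(W^{-1})}$ only if we first replace the target weight — so the clean route is: replace the driving term $F_S,F_P$ inside the scalar estimates by using $|V|\le W$ to pass from $\|F_S\|_{L^2(|V|^{-1})}$ \emph{up} and then note that $F_P=\nabla\varphi$ with $-\Delta\varphi=\operatorname{div}F$, which is precisely the setup of Lemma \ref{lemma}. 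Concretely, one writes $F_P=\nabla\varphi$, applies \eqref{lem} with the minus sign to get $\|F_P\|_{L^2(W^{-1})}=\|\nabla\varphi\|_{L^2(W^{-1})}\lesssim\|F\|_{L^2(W^{-1})}\le\|F\|_{L^2(|V|^{-1})}$ (the last step using $W^{-1}\le|V|^{-1}$ since $|V|\le W$), and then $F_S=F-F_P$ is controlled the same way. Since $W$ depends only on $x$, the time variable is a harmless parameter and one integrates in $t$ at the end; the uniformity of the implicit constant in $V$ comes from Lemma \ref{uniformA1} as in the proof of Lemma \ref{lemma}. Finally one checks the condition $p>(n-1)/2$ guarantees $(n-1)/2<p\le n/2$ (for $n\ge3$ this forces $n=3$, where $1<p\le 3/2$, or is read as the hypothesis $p>(n-1)/2$ together with $\mathcal F^p$ being meaningful for $p\le n/2$), which is what the scalar estimates of \cite{RV} and Lemma \ref{lemma} require, and one assembles the pieces: the bound on $u_S$, the bound on $u_P$, and the two input bounds $\|F_S\|,\|F_P\|\lesssim\|F\|$ in $L^2_{x,t}(|V|^{-1})$, combining them by the triangle inequality to obtain \eqref{weiinho} and, identically, \eqref{smooinho}.
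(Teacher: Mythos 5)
You have all the right ingredients---the Helmholtz split, the componentwise estimates from \cite{RV}, the majorant $W=M(|V|^\delta)^{1/\delta}$, and Lemma \ref{lemma}---but the way you chain them has a genuine gap. You apply the scalar inhomogeneous estimate \eqref{kkk} with the weight $|V|$ itself, which reduces \eqref{weiinho} to the input bounds $\|F_S\|_{L^2_{x,t}(|V|^{-1})}+\|F_P\|_{L^2_{x,t}(|V|^{-1})}\lesssim\|F\|_{L^2_{x,t}(|V|^{-1})}$. But Lemma \ref{lemma} only gives such bounds for the weight $W^{-1}$, and since $|V|\le W$ means $W^{-1}\le|V|^{-1}$, what you actually establish, namely $\|F_S\|_{L^2_{x,t}(W^{-1})}+\|F_P\|_{L^2_{x,t}(W^{-1})}\lesssim\|F\|_{L^2_{x,t}(|V|^{-1})}$, is weaker than what your reduction requires: the comparison $\|F_P\|_{L^2_{x,t}(|V|^{-1})}\le\|F_P\|_{L^2_{x,t}(W^{-1})}$ that your ``pass up'' step needs goes in the wrong direction, and the projection bound with $|V|^{-1}$ on both sides can genuinely fail because $|V|^{-1}$ need not be an $A_2$ weight---precisely the obstacle you yourself identified. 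Your closing sentence then asserts the input bounds ``in $L^2_{x,t}(|V|^{-1})$'' without proof, so the chain does not close.

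The repair is the paper's opening move, which you gesture at (``replace the target weight'') but never execute: run the whole estimate with $W$ on \emph{both} sides. Since $W\in A_1\cap\mathcal F^p$ with $\|W\|_{\mathcal F^p}\lesssim\|V\|_{\mathcal F^p}$ by \eqref{WV}, you may apply \eqref{kkk} with $w=W$ to get $\|u_S\|_{L^2_{x,t}(W)}\lesssim\|W\|_{\mathcal F^p}\|F_S\|_{L^2_{x,t}(W^{-1})}$ (and likewise for $u_P$), then use Lemma \ref{lemma} at each fixed $t$ (writing $(F_t)_P=-\nabla\psi_t$ with $-\Delta\psi_t=\operatorname{div}F_t$) to control the inputs by $\|F\|_{L^2_{x,t}(W^{-1})}$, and only at the very end pass to $V$: $\|u\|_{L^2_{x,t}(|V|)}\le\|u\|_{L^2_{x,t}(W)}$ on the output side and $\|F\|_{L^2_{x,t}(W^{-1})}\le\|F\|_{L^2_{x,t}(|V|^{-1})}$ on the input side. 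The same rearrangement proves \eqref{smooinho}, using the local-smoothing analogue of \eqref{kkk} from \cite{RV} (there the left-hand side carries no weight, so only the input requires $W$). A minor aside: your remark that $p>(n-1)/2$ ``forces $n=3$'' is incorrect---the range $(n-1)/2<p\le n/2$ is nonempty for every $n\ge3$---but this does not affect the argument.
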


\begin{proof}
Let $1<\delta<p$ and $W=M(|V|^\delta)^{1/\delta}$.
Since $|V|\le W$ almost everywhere, using \eqref{WV} we may prove the proposition by replacing $V$ with $W$.
The motivation behind this replacement is the availability of the elliptic regularity estimates, Lemma \ref{lemma}.

By \eqref{decoup} with $f=g=0$, we have
\begin{align*}
	u_S(t)&=\int_0^t \sin\big((t-s)\sqrt{-\mu\Delta}\big) \sqrt{-\mu\Delta}^{-1} F_S(s) ds, \\
	\nonumber	u_P(t)&=\int_0^t \sin\big((t-s)\sqrt{-(\lambda+2\mu) \Delta})\big) \sqrt{-(\lambda+2\mu) \Delta}^{-1} F_P(s) ds.
\end{align*}
Let us first show \eqref{weiinho} for $(u_S, F_S)$ and $(u_P, F_P)$, respectively, in place of $(u,F)$.
Applying the following inequality (\cite[Proposition 4.2]{RV}) for $w\in \mathcal F^p$, $p>(n-1)/2$,
\begin{equation}\label{kkk}
	\bigg\|\int_0^t \frac{\sin((t-s)\sqrt{-\Delta})}{\sqrt{-\Delta}} \varphi(s) ds  \bigg\|_{L^2_{x,t}(|w|)} \lesssim \|w\|_{\mathcal{F}^p}\|\varphi\|_{L^2_{x,t}(|w|^{-1})},
\end{equation}
to each component of $u_S$, we get
$$	\|u_S\|_{L^2_{x,t}(W)}
	\lesssim \bigg(\|W\|^2_{\mathcal{F}^p} \sum_{j=1}^n \|(F_S)_j\|^2_{L^2_{x,t}(W^{-1})}\bigg)^{1/2}
	=\|W\|_{\mathcal{F}^p}\|F_S\|_{L^2_{x,t}(W^{-1})}.
$$
The proof of \eqref{weiinho} for $(u_P, F_P)$ is similar. Hence we have
\begin{equation}\label{bfRV2}
	\|u\|_{L^2_{x,t}(W)}
	\lesssim \|W\|_{\mathcal{F}^p} \big( \|F_S\|_{L^2_{x,t}(W^{-1})} + \|F_P\|_{L^2_{x,t}(W^{-1})}\big).
\end{equation}
Finally we use Lemma \ref{lemma} to conclude
$$\|F_S\|_{L^2_{x,t}(W^{-1})} + \|F_P\|_{L^2_{x,t}(W^{-1})}\lesssim\|F\|_{L^2_{x,t}(W^{-1})}.$$
Fixing $t\in \mathbb R$ let us write $F_t(x)=F(x,t)$ and
\[	F_t=F_t-\nabla \psi_t+\nabla \psi_t,	\]
where $\psi_t$ is a solution to $-\Delta \psi_t =\mathrm{div} F_t$. Since $\mathrm{div}(F_t +\nabla \psi_t)=0$, by the uniqueness of the Helmholtz decomposition, we conclude that
\begin{equation*}
	(F_t)_S=F_t+\nabla \psi_t, \quad (F_t)_P =-\nabla \psi_t.
\end{equation*}
Making use of Lemma \ref{lemma}, we now see
\begin{align*}
	\|F_S\|_{L_{x,t}^2(W^{-1})} + \|F_P\|_{L_{x,t}^2(W^{-1})}
	&= \big\| \|F_t+\nabla \psi_t\|_{L^2_x(W^{-1})} \big\|_{L_t^2} +\big\| \|\nabla \psi_t\|_{L^2_x (W^{-1})} \big\|_{L_t^2} \\
	&\lesssim \|F\|_{L^2_{x,t}(W^{-1})}.
\end{align*}
Combining this with \eqref{bfRV2}, we get
\[	\|u\|_{L^2_{x,t}(W)} \lesssim \|W\|_{\mathcal{F}^p} \|F\|_{L^2_{x,t}(W^{-1})}	\]
as desired.

The proof of \eqref{smooinho} is similar replacing \eqref{kkk} by the following inequality ((1.4) in \cite[Theorem 1]{RV}),
\[	\sup_{x_0,R}\frac{1}{R} \int_{B(x_0,R)}\int_{-\infty}^{\infty} \Big| |\nabla|^\frac{1}{2}\int_0^t \frac{\sin((t-s)\sqrt{-\Delta})}{\sqrt{-\Delta}}G(s)ds\Big|^2  dtdx
	\lesssim \|w\|_{\mathcal{F}^p}\|G\|^2_{L^2_{x,t}(|w|^{-1})}.
\]
So we shall omit the details.
\end{proof}

\section{Proof of Theorem \ref{thm2}} \label{sec5}
This section is devoted to proving Theorem \ref{thm2}.
We first write the solution to \eqref{eq} as the sum of the solution to the free elastic wave equation ($\delta=0$) plus a Duhamel term
by considering the potential term as a source term.
Then we apply the estimates obtained in the previous sections to each of the terms.

\subsubsection*{Existence and uniqueness} For any $V\in\mathcal F^p$ in Theorem \ref{thm1} and  $\delta>0$, define
\begin{equation*}
	\mathcal{T}u (t) := \int_0^t \sin\big((t-s)\sqrt{-\Delta^*}\big) \sqrt{-\Delta^\ast}^{-1} \delta V(\cdot)u(\cdot,s)ds.
\end{equation*}
Regarding the potential term $\delta V(x)u(x,t)$ in \eqref{eq} as a source term ($F$ in \eqref{inhoeq}),
we see that if the solution $u$ exists then it must satisfy
\begin{equation} \label{sol}
	u=\mathcal A u(t):=\cos(t\sqrt{-\Delta^*}) f + \sin(t\sqrt{-\Delta^*}) \sqrt{-\Delta^*}^{-1} g + \mathcal{T}u(t).
\end{equation}
By Proposition \ref{prop1} and the estimate \eqref{weiinho} with $F=\delta Vu$,
the operator $\mathcal A\colon L^2_{x,t}(|V|)\to L^2_{x,t}(|V|)$ is well-defined. Moreover,
\begin{equation}\label{exiinho}
	\|\mathcal Tu\|_{L_{x,t}^2(|V|)} \lesssim \|V\|_{\mathcal F^p} \|\delta Vu\|_{L^2_{x,t}(|V|^{-1})} \leq|\delta| \|V\|_{\mathcal F^p} \|u\|_{L^2_{x,t}(|V|)}.
\end{equation}
Hence  $\|\mathcal Au_1-\mathcal Au_2\|_{L^2_{x,t}(|V|)}=\|\mathcal T(u_1- u_2)\|_{L^2_{x,t}(|V|)} \lesssim |\delta| \|V\|_{\mathcal F^p} \|u_1-u_2\|_{L^2_{x,t}(|V|)}$, that is, $\mathcal A$ is a contraction provided that $|\delta|$ is small enough. Thus, by the contraction mapping principle, there exists a unique solution $u\in L^2_{x,t}(|V|)$ to \eqref{eq}.

\subsubsection*{Proof of \eqref{exi}}
In order to prove $u \in C([0,\infty);\dot H^{1/2}(\mathbb{R}^n))$ it is enough to show
\begin{gather}
	\label{unicos}	\sup_{t\in\mathbb R} \| \cos(t\sqrt{-\Delta^\ast}) f \|_{\dot H^{1/2}} <\infty, \\
	\label{unisin}	\sup_{t\in\mathbb R} \| \sin(t\sqrt{-\Delta^\ast}) \sqrt{-\Delta^\ast}^{-1} g\|_{\dot H^{1/2}} <\infty
\end{gather}
and
\begin{equation}\label{uni_inho}
	\sup_{t\in\mathbb R} \bigg\| \int_0^t \sin\big((t-s)\sqrt{-\Delta^*}\big) \sqrt{-\Delta^\ast}^{-1} \delta V(\cdot)u(\cdot,s)ds \bigg\|_{\dot H^{1/2}} <\infty.
\end{equation}

The estimate \eqref{unicos} follows from the decomposition \eqref{cosine} combined with Plancherel's theorem and the orthogonality (Lemma \ref{Ortho}). Indeed, for any $t\in\mathbb R$,
\begin{align*}
	\| \cos(t\sqrt{-\Delta^\ast}) f \|_{\dot H^{1/2}}
	&\le \| \cos(t\sqrt{-\mu\Delta}) f_S \|_{\dot H^{1/2}} + \| \cos(t\sqrt{-(\lambda+2\mu)\Delta}) f_P \|_{\dot H^{1/2}} \\
	&\le \| |\nabla|^{1/2}f_{S}\|_{L^2} + \||\nabla|^{1/2}f_P\|_{L^2} \lesssim \|f\|_{\dot H^{1/2}}.
\end{align*}
In a similar manner, making use of \eqref{sine} we have that for any $t\in \mathbb R$
\begin{align*}
	&\| \sin(t\sqrt{-\Delta^\ast}) \sqrt{-\Delta^\ast}^{-1} g \|_{\dot H^{1/2}} \\
	&\le \| \sin(t\sqrt{-\mu\Delta}) \sqrt{-\mu\Delta}^{-1} g_S \|_{\dot H^{1/2}} + \| \sin(t\sqrt{-(\lambda+2\mu)\Delta})\sqrt{-(\lambda+2\mu)\Delta}^{-1} g_P \|_{\dot H^{1/2}} \\
	&\lesssim \| |\nabla|^{-1/2} g_{S}\|_{L^2} + \||\nabla|^{-1/2} g_P\|_{L^2} \lesssim \|g\|_{\dot H^{-1/2}},
\end{align*}
which gives \eqref{unisin}.

For \eqref{uni_inho}, by the identity $\sin\theta=(e^{i\theta}-e^{-i\theta})/2i$ and the norm equivalence \eqref{normequi},  it is enough to show that
\begin{equation*}
	\sup_{t\in\mathbb R} \bigg\| |\nabla|^{-1/2}\int_0^t e^{i(t-s)\sqrt{-\Delta^*}} \delta V(\,\cdot\,)u(\cdot,s)ds \bigg\|_{L^2} <\infty.
\end{equation*}
Since $e^{it\sqrt{-\Delta^*}}$ is an isometry in $[L^2(\mathbb R^n)]^n$, this follows from applying the following estimate
with $F=\chi_{[0,t]}\delta Vu$:
\begin{equation}\label{propa1adj}
	\bigg\| \int_{-\infty}^\infty e^{-is\sqrt{-\Delta^\ast}} F(s) ds \bigg\|_{\dot H^{-1/2}} \lesssim \|V\|^{1/2}_{\mathcal F^p}\|F\|_{L^2_{x,t}(|V|^{-1})}.
\end{equation}
Since $(e^{it\sqrt L})^\ast= e^{-it\sqrt L}$, by duality it is easy to see that
\eqref{propa1adj} is equivalent to \eqref{weihomo}.

The other assertion $\partial_tu\in C([0,\infty);\dot H^{-1/2}(\mathbb R^n))$ can be proved similarly.  Since
\begin{align} \label{u_t}
\nonumber
	\partial_tu(t)
	= \cos(t\sqrt{-\Delta^\ast})g &-\sin(t\sqrt{-\Delta^\ast})\sqrt{-\Delta^\ast} f \\
&+\int_0^t \cos\big((t-s)\sqrt{-\Delta^\ast}\big)  \delta V(\,\cdot\,)u(\,\cdot\,,s) ds
\end{align}
we proceed with the previous argument to see that
$$	\|\cos(t\sqrt{-\Delta^\ast})g\|_{\dot H^{-1/2}} \lesssim \|g\|_{\dot H^{-1/2}}, \quad \|\sin(t\sqrt{-\Delta^\ast})\sqrt{-\Delta^\ast} f\|_{\dot H^{-1/2}} \lesssim \|f\|_{\dot H^{1/2}}	$$
and
$$	\bigg\| \int_0^t \cos\big((t-s)\sqrt{-\Delta^\ast}\big)  \delta V(\cdot)u(\cdot,s) ds \bigg\|_{\dot H^{-1/2}} \lesssim |\delta|\|V\|_{\mathcal F^p}^{1/2}\|u\|_{L^2_{x,t}(|V|)}.$$

\subsubsection*{Proofs of \eqref{wei} and \eqref{smoo}}
From \eqref{sol}, Proposition \ref{prop1} and the estimate \eqref{exiinho}, it follows that
\[	\|u\|_{L^2_{x,t}(|V|)} \lesssim \|V\|_{\mathcal F^p}^{1/2}\big(\|f\|_{\dot H^{1/2}}+\|g\|_{\dot H^{-1/2}}\big) +|\delta|\|V\|_{\mathcal F^p}\|u\|_{L^2_{x,t}(|V|)}.	 \]
Thus,  for $|\delta|$ small enough, the estimate \eqref{wei} follows.

Now we prove \eqref{smoo}.
We first reduce the matter to obtaining the bounds for $(1+|\nabla|^{1/2})$ and $|\nabla|^{-1/2}$ in place of $(1-\Delta)^{1/4}$ and $(1-\Delta)^{-1/4}$ in \eqref{smoo}, respectively.
This is because $(1-\Delta)^{1/4}(1+|\nabla|^{1/2})^{-1}$ and $(1-\Delta)^{-1/4}|\nabla|^{1/2}$ are Mikhlin multipliers.
Then the reduction follows from the following boundedness of Mikhlin multipliers in Morrey Space (see, for example, Theorem 3 in \cite{MWB}):

\begin{lem}\label{Mik}
	Let $n\geq1$ and $M$ be a Mikhlin multiplier operator.
	Then for $1<p<\infty$ and $0 \leq \lambda <n$
	\begin{equation*}
	M: \mathcal{L}^{p,\lambda}(\mathbb{R}^n) \rightarrow \mathcal{L}^{p,\lambda}(\mathbb{R}^n),
	\end{equation*}	
	where $\mathcal{L}^{p,\lambda}$ denotes the Morrey space defined by
	\begin{equation*}
	\|f\|_{\mathcal{L}^{p,\lambda}} = \sup_{x \in \mathbb{R}^n,r>0} r^{-\frac{\lambda}{p}}\bigg(\int_{B(x,r)} |f(y)|^p dy \bigg)^{1/p} < \infty.
	\end{equation*}
(Note here that $\mathcal{L}^{p,\lambda}=\mathcal{F}^p$ particularly when $\lambda = n-2p$.)
\end{lem}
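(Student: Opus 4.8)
The plan is to use the fact that a Mikhlin multiplier operator $M$ is a Calder\'on--Zygmund operator. By the H\"ormander--Mikhlin theorem it is bounded on $L^q(\mathbb R^n)$ for every $1<q<\infty$, and outside the diagonal it acts by a convolution kernel $K$ satisfying the size estimate $|K(x)|\lesssim|x|^{-n}$. These are the only two ingredients needed; since we work in the range $1<p<\infty$, neither the H\"ormander regularity condition nor the weak $(1,1)$ bound enters.

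Fix a ball $B=B(x_0,r)$ and split $f=f_1+f_2$ with $f_1=f\chi_{2B}$ and $f_2=f\chi_{\mathbb R^n\setminus 2B}$, so that $Mf=Mf_1+Mf_2$ on $B$. For the local piece, the $L^p$-boundedness of $M$ and the definition of the Morrey norm give
\[
\|Mf_1\|_{L^p(B)}\le\|Mf_1\|_{L^p(\mathbb R^n)}\lesssim\|f\|_{L^p(2B)}\lesssim r^{\lambda/p}\|f\|_{\mathcal L^{p,\lambda}}.
\]
For the tail, any $x\in B$ lies outside the support of $f_2$, so the kernel representation applies; using $|x-y|\sim|x_0-y|$ for such $x$ and $y\notin 2B$, splitting $\mathbb R^n\setminus 2B$ into the dyadic shells $2^{k+1}B\setminus 2^kB$, $k\ge1$, and applying H\"older's inequality together with the Morrey bound on each shell yields
\[
|Mf_2(x)|\lesssim\sum_{k\ge1}(2^kr)^{-n}\int_{2^{k+1}B}|f(y)|\,dy\lesssim\|f\|_{\mathcal L^{p,\lambda}}\sum_{k\ge1}(2^kr)^{(\lambda-n)/p}.
\]
Since $\lambda<n$ the exponent $(\lambda-n)/p$ is negative, so the geometric series converges and the sum is $\lesssim r^{(\lambda-n)/p}\|f\|_{\mathcal L^{p,\lambda}}$; multiplying by $|B|^{1/p}\sim r^{n/p}$ gives $\|Mf_2\|_{L^p(B)}\lesssim r^{\lambda/p}\|f\|_{\mathcal L^{p,\lambda}}$. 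Adding the two bounds, dividing by $r^{\lambda/p}$, and taking the supremum over all balls gives $\|Mf\|_{\mathcal L^{p,\lambda}}\lesssim\|f\|_{\mathcal L^{p,\lambda}}$.

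The only genuinely delicate point, which I expect to be the main obstacle, is giving meaning to $Mf$ when $f$ lies merely in $\mathcal L^{p,\lambda}$ and hence need not belong to $L^p(\mathbb R^n)$ globally; but the splitting above already resolves it, since $Mf_1$ is defined by $L^p$-boundedness and the tail estimate shows $\int_{\mathbb R^n\setminus 2B}|K(x-y)||f(y)|\,dy<\infty$ for a.e.\ $x\in B$, so that $Mf$ is unambiguously defined on each ball by first taking $f$ bounded with compact support and then passing to the limit through this decomposition. With this convention the estimates above constitute the entire proof; the full details may be found in Theorem~3 of \cite{MWB}.
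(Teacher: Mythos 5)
The paper does not actually prove Lemma \ref{Mik}: it simply quotes it from Theorem 3 of \cite{MWB}. Your argument is the standard self-contained route — the Chiarenza--Frasca-type splitting $f=f\chi_{2B}+f\chi_{\mathbb R^n\setminus 2B}$, $L^p$-boundedness for the local piece, and the kernel size bound $|K(x)|\lesssim|x|^{-n}$ summed over dyadic shells for the tail — and the computation is correct: the shell estimate gives $(2^kr)^{(\lambda-n)/p}$, the series converges precisely because $\lambda<n$, and multiplying by $|B|^{1/p}\sim r^{n/p}$ recovers $r^{\lambda/p}$. Your remark on how to define $Mf$ for $f$ merely in $\mathcal L^{p,\lambda}$ is also the right way to handle that point.

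One caveat deserves attention. The step ``a Mikhlin multiplier acts off the diagonal by a kernel with $|K(x)|\lesssim|x|^{-n}$'' is not a consequence of the minimal Mikhlin--H\"ormander hypothesis ($|\partial^\alpha m(\xi)|\lesssim|\xi|^{-|\alpha|}$ only for $|\alpha|\le\lfloor n/2\rfloor+1$): the dyadic argument $|K_j(x)|\lesssim|x|^{-L}2^{j(n-L)}$ yields a summable series only when $L>n$, and under the minimal condition one only gets averaged bounds such as $\|K\|_{L^2(|x|\sim R)}\lesssim R^{-n/2}$ (which would salvage your tail estimate for $p\ge2$ via Cauchy--Schwarz on each shell, but not directly for $1<p<2$; in that generality one would instead invoke weighted $L^p(w)$ bounds for $A_p$ weights). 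So your proof is complete if ``Mikhlin multiplier'' is understood, as is common and as in the intended application, to mean symbol estimates of sufficiently high order (more than $n$ derivatives). For the two multipliers the paper actually feeds into this lemma, $(1-\Delta)^{1/4}(1+|\nabla|^{1/2})^{-1}$ and $(1-\Delta)^{-1/4}|\nabla|^{1/2}$, the symbols satisfy Mikhlin-type bounds of every order, so the pointwise kernel bound is legitimate and your argument fully covers the use made of the lemma; just state that stronger hypothesis explicitly, or replace the pointwise tail bound by the averaged one, if you want the statement for general H\"ormander--Mikhlin multipliers.
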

Indeed, applying this lemma to each component of $(1-\Delta)^{1/4}u$ in \eqref{smoo}, we have
\begin{align} \label{afM}
\nonumber
\sup_{x_0,R} \frac{1}{R} \int_{B(x_0,R)} |(1-\Delta)^{1/4}u|^2 dx
&=\sum_{j=1}^n\big\|(1-\Delta)^{1/4}(1+|\nabla|^{1/2})^{-1}(1+|\nabla|^{1/2})u_j\big\|^2_{\mathcal{L}^{2,1}}\\
\nonumber&\lesssim\sum_{j=1}^n\big\|(1+|\nabla|^{1/2})u_j\big\|^2_{\mathcal{L}^{2,1}}\\
&=\sup_{x_0,R} \frac{1}{R} \int_{B(x_0,R)}|(1+|\nabla|^{1/2})u|^2 dx.
\end{align}
Similarly,
\begin{equation}\label{afM2}
\sup_{x_0,R}\frac{1}{R}\int_{B(x_0,R)} |(1-\Delta)^{-1/4} \partial_t u|^2 dx
\lesssim\sup_{x_0,R} \frac{1}{R} \int_{B(x_0,R)}\big||\nabla|^{-1/2}\partial_t u\big|^2 dx.
\end{equation}

To bound the right side of \eqref{afM},
using H\"older's inequality, we first note that
\begin{equation*}
\sup_{x_0, R}\frac{1}{R} \int_{B(x_0,R)}|u_j|^2 dx \leq \sup_{x_0,R} \frac{1}{R} \|\chi_B\|^2_{L^{r}}\|u_j\|_{L^q}^2\lesssim \|u_j\|_{L^q}^2
\end{equation*}
provided $1/q+1/{r}=1/2$ and $2n/r=1$.
This condition $n(1/2-1/q)=1/2$ is the exact one for the Sobolev embedding $\dot H^{1/2} \hookrightarrow L^q$.
Therefore, we get
\begin{equation*}
\sup_{x_0,R} \frac{1}{R} \int_{B(x_0,R)} \int_{-T}^T |u|^2 dt dx
\lesssim T\sum_{j=1}^n\|u_j\|^2_{\dot H^{1/2}}\lesssim T\|u\|^2_{\dot H^{1/2}}.
\end{equation*}
But, $\|u\|^2_{\dot H^{1/2}}\lesssim \|f\|^2_{\dot H^{1/2}} + \|g\|^2_{\dot H^{-1/2}}$
which follows from the proof of \eqref{exi} together with \eqref{wei}.
On the other hand, by combining \eqref{sol} with the estimates \eqref{cosmoo}, \eqref{sismoo} and \eqref{smooinho} with $F=\delta Vu$,
we see that
\begin{align*}
\sup_{x_0,R}\frac{1}{R} \int_{B(x_0,R)}\int_{-T}^{T} \big||\nabla|^{1/2}u\big|^2 dtdx
&\lesssim \|f\|^2_{\dot H^{1/2}} + \|g\|^2_{\dot H^{-1/2}} + |\delta|^2 \|V\|_{\mathcal F^p} \|u\|^2_{L^2_{x,t}(|V|)} \\
&\lesssim \|f\|^2_{\dot H^{1/2}} + \|g\|^2_{\dot H^{-1/2}},
\end{align*}
where we used \eqref{wei} for the last inequality.

It remains to bound \eqref{afM2}.
By \eqref{u_t}, it suffices to show
\begin{equation} \label{u_t1}
\sup_{x_0,R} \frac{1}{R} \int_{B(x_0,R)} \int_{-T}^{T} \big| |\nabla|^{-1/2}\sqrt{-\Delta^*} \sin (t\sqrt{-\Delta^*})f \big| ^2 dx dt \lesssim \|f\|^2_{\dot H^{1/2}},
\end{equation}
\begin{equation} \label{u_t2}
\sup_{x_0,R} \frac{1}{R} \int_{B(x_0,R)} \int_{-T}^{T} \big| |\nabla|^{-1/2}\cos (t\sqrt{-\Delta^*})g \big| ^2 dx dt \lesssim \|g\|^2_{\dot H^{-1/2}}
\end{equation}
and
\begin{align}\label{u_t3}
\nonumber
\sup_{x_0,R} \frac{1}{R} \int_{B(x_0,R)} \int_{-T}^{T} \Big| |\nabla|^{-1/2}\int_0^t  \cos((t-s)\sqrt{-\Delta^*})&\delta V(\cdot)u(\cdot,s) ds\Big| ^2 dx dt \\
&\lesssim \|f\|^2_{\dot H^{1/2}}+\|g\|^2_{\dot H^{-1/2}}.
\end{align}
The left side of \eqref{u_t1} is bounded by
$$
\||\nabla|^{-1}\sqrt{-\Delta^*}f\|^2_{\dot H^{1/2}} \sim \|f\|^2_{\dot H^{1/2}}
$$
using \eqref{smoohom} with $f$ replaced by $|\nabla|^{-1}\sqrt{-\Delta^*}f$ and then the norm equivalence \eqref{normequi},
while \eqref{u_t2} follows from using \eqref{smoohom} with $f=|\nabla|^{-1}g$.
Finally, making use of \eqref{smoohom}, \eqref{propa1adj} and \eqref{wei}, the left side of \eqref{u_t3} is bounded by
\begin{align*}
\nonumber
\bigg\|\int_0^t e^{-is\sqrt{-\Delta^*}} \delta V(\cdot)u(\cdot,s) ds \bigg\|^2_{\dot H^{-1/2}}
\lesssim |\delta|^2\|V\|_{\mathcal{F}^p}\|u\|^2_{L_{x,t}^2(|V|)} \lesssim \|f\|^2_{\dot H^{1/2}} +\|g\|^2_{\dot H^{-1/2}}.
\end{align*}

\subsubsection*{Note added in the proof}
Some estimates in Sections \ref{sec3} and \ref{sec4} used in the proof were also obtained in \cite{BFPRV}
for the forward initial value problem using the spectral operational calculus.
But our approach is based on the representation of the solution in terms of Fourier multipliers,
which is simpler than the abstract one.

\section{Proof of Theorem \ref{thm1}} \label{sec6}
Finally we prove the Strichartz estimates in Theorem \ref{thm1}.
Let us recall \eqref{sol}. For the homogeneous terms we use the Helmholtz decomposition (\eqref{cosine} and \eqref{sine}),
the classical Strichartz estimate \eqref{Strr} for the free wave equation \eqref{wave_eqn} with $\delta=0$,
and the orthogonality \eqref{Pythagorean} to obtain
\begin{align}\label{Strcos}
\nonumber\|\cos(t\sqrt{-\Delta^\ast}) f\|_{L_t^q \dot H^\sigma_r} & \le \|\cos(t\sqrt{-\mu\Delta}) f_S\|_{L_t^q \dot H^\sigma_r} +\|\cos(t\sqrt{-(\lambda+2\mu)\Delta}) f_P\|_{L_t^q \dot H^\sigma_r} \\
	&\lesssim \| f_S\|_{\dot H^{1/2}} + \| f_P\|_{\dot H^{1/2}} \lesssim \|f\|_{\dot H^{1/2}}
\end{align}
and
\begin{align}\label{Strsin}
\nonumber
	\|\sin(t\sqrt{-\Delta^\ast}) \sqrt{- \Delta^\ast}^{-1} g\|_{L_t^q \dot H^\sigma_r}
\nonumber
	&\le \|\sin(t\sqrt{-\mu\Delta}) \sqrt{-\mu\Delta}^{-1} g_S\|_{L_t^q \dot H^\sigma_r} \\
\nonumber
	&\quad +\|\sin(t\sqrt{-(\lambda+2\mu)\Delta})\sqrt{-(\lambda+2\mu)\Delta}^{-1} g_P\|_{L_t^q \dot H^\sigma_r} \\
	&\lesssim \| g_S\|_{\dot H^{-1/2}} + \| g_P\|_{\dot H^{-1/2}} \lesssim \|g\|_{\dot H^{-1/2}}
\end{align}
for any wave-admissible $(q,r)$ (see \eqref{admi}).

For the inhomogeneous term, by \eqref{wei} it is enough to show
\begin{equation}\label{Str_inho}
	\|\mathcal Tu\|_{L^q_t\dot H_r^\sigma} \lesssim |\delta| \|V\|_{\mathcal F^p}^{1/2} \|u\|_{L^2_{x,t}(|V|)}.
\end{equation}
To show this, we use the identity $\sin \theta=(e^{i\theta}-e^{-i\theta})/2i$ and the Christ-Kiselev lemma (see \cite{CK}) by which it suffices to show
\begin{equation}\label{Str_goal}
	\bigg\| \int_{-\infty}^\infty e^{i(t-s)\sqrt{-\Delta^\ast}}\sqrt{-\Delta^\ast}^{-1} \delta V(\cdot) u(\cdot, s) ds \bigg\|_{L^q_t\dot H_r^\sigma} \lesssim 	|\delta| \|V\|_{\mathcal F^p}^{1/2} \|u\|_{L^2_{x,t}(|V|)}.
\end{equation}
since we are assuming $q>2$.
By the norm equivalence \eqref{normequi}, the estimate \eqref{Strsin} is equivalently written as
$$\| \sin(t\sqrt{-\Delta^\ast})f\|_{L^q_t\dot H_r^\sigma}\lesssim \|f\|_{\dot H^{1/2}}.$$
Combining this with \eqref{Strcos} gives
$$\|e^{\pm i t\sqrt{-\Delta^\ast} }f\|_{L^q_t\dot H_r^\sigma} \lesssim \|f\|_{\dot H^{1/2}}$$
from which we see that the left side of \eqref{Str_goal} is bounded by
$$	\bigg\| \int_{-\infty}^\infty e^{-is\sqrt{-\Delta^\ast}}\sqrt{-\Delta^\ast}^{-1} \delta V(\cdot) u(\cdot, s) ds \bigg\|_{\dot H^{1/2}}
	\sim \bigg\| \int_{-\infty}^\infty e^{-is\sqrt{-\Delta^\ast}}\delta V(\cdot) u(\cdot, s) ds \bigg\|_{\dot H^{-1/2}} .
$$
Now we apply the estimate \eqref{propa1adj} to conclude that the above is bounded by
$$|\delta| \|V\|_{\mathcal F^p}^{1/2} \|u\|_{L^2_{x,t}(|V|)}$$
which gives \eqref{Str_goal} as desired.
Combining \eqref{Strcos}, \eqref{Strsin} and \eqref{Str_inho} yields the Strichartz estimates \eqref{Str}, and completes the proof of Theorem \ref{thm1}.


\end{document}